\newcommand{\head}[1]{\textnormal{\textbf{#1}}}
\newtheorem{theorem}{Theorem}[section]
\newtheorem{corollary}[theorem]{Corollary}
\newtheorem{lemma}[theorem]{Lemma}
\newtheorem{proposition}[theorem]{Proposition}
\newtheorem{definition}[theorem]{Definition}
\newtheorem{remark}[theorem]{Remark}
\newtheorem{condition}[theorem]{Condition}
\def\OLS{\textsc{ols}}
\def\RRR{\textsc{RRR}}
\newcommand{\real}{\mathbb R}
\newcommand{\spn}{\mathrm{span}}
\newcommand{\diag}{\mathrm{diag}}
\newcommand{\E}{\mathrm{E}}
\newcommand{\var}{\mathrm{var}}
\newcommand{\pr}{\mathrm{pr}}
\newcommand{\rank}{\mathrm{rank}}
\newcommand{\vecc}{\mathrm{vec}}
\newcommand{\MMn}{M_n}
\newcommand{\MM}{M}
\newcommand{\res}{{\text{res}}}
\newcommand{\firsts}{{\scriptstyle \first}}
\newcommand{\first}{\mathrm{first}}
\newcommand{\sxires}{\widehat{{\scriptstyle \xi}}_n^{\scriptstyle{\text{res}}}}
\newcommand{\Tan}{T_{\xis_0}}
\newcommand{\xires}{\widehat{\xi}_n^{\res}}
\newcommand{\xis}{{\scriptstyle \xi}}
\newcommand{\betas}{\scriptstyle \beta}
\newcommand{\etas}{\scriptstyle \eta}
\begin{document}

\title{Asymptotic theory for maximum likelihood estimates in reduced-rank multivariate generalised linear models}

\author[a$^\ast$]{Efstathia Bura \thanks{Corresponding author. Email: efstathia.bura@tuwien.ac.at}}
\author[b]{Sabrina Duarte}
\author[b]{Liliana Forzani}
\author[c]{Ezequiel Smucler}
\author[c]{Mariela Sued}

\affil[a$^\ast$]{Institute of Statistics and Mathematical Methods in Economics, TU Wien, A-1040 Vienna, Austria and Department of Statistics, George Washington University, Washington, D.C. 20052, U.S.A.}
\affil[b]{Facultad de Ingenier\'{i}a Qu\'{i}mica, UNL, Researcher of CONICET, 3000 Santa Fe, Argentina}
\affil[c]{Instituto de C\'{a}lculo, UBA,  CONICET, FCEN,  1428 Buenos Aires, Argentina}

\date{}

\maketitle

\begin{abstract}
Reduced-rank regression is a dimensionality reduction method with many applications. The asymptotic theory for reduced rank estimators of parameter matrices in multivariate linear models has been studied extensively.
In contrast, few  theoretical results are available for reduced-rank multivariate generalised linear models. We develop M-estimation theory for concave criterion functions that are maximised over parameters spaces that are neither convex nor closed. These results are used to  derive the consistency and asymptotic  distribution of maximum likelihood estimators in  reduced-rank multivariate generalised linear models, when the response and predictor vectors
have a joint distribution. We illustrate our results  in a real data classification problem with binary covariates.
\end{abstract}

\noindent%
{\it Keywords:} M-estimation; exponential family; rank restriction; non-convex; parameter spaces
\vfill

%\begin{classcode}01\ref{(b)}3; 45B67 \textbf{(... for example; authors are requested to provide at least one 2010 Mathematics Subject Classification code)}\end{classcode}

\section{Introduction}

The multivariate multiple linear regression model for a  $q$-dimensional  response $Y=(Y_1,\ldots, Y_q)^T$
and a $p$-dimensional predictor vector $X=(X_1,\ldots, X_p)^T$ postulates that  $Y=B X+\epsilon$,
where $B$ is a $q\times p$ matrix and
$\epsilon= (\epsilon_1,\ldots, \epsilon_q)^T$  is the error term,  with $\E(\epsilon)=0$ and
$\var(\epsilon)=\Sigma$.  Based on a random sample $(X_1, Y_1), \ldots, (X_n, Y_n)$
satisfying the model,
ordinary least squares estimates the parameter matrix
$B$ by minimizing the squared error loss function $\sum_{i=1}^n || Y_i-B X_i||^2$ to obtain
\begin{equation}
\label{ls-loss}
%\sum_{i=1}^n || Y_i-B\X_i||^2\;, \quad\hbox{to obtain } \quad
\widehat{B}_{\OLS}=\left(\sum_{i=1}^n Y_i X_i^T\right) \left(\sum_{i=1}^n X_i X_i^T\right) ^{-1}.
\end{equation}

Reduced-rank regression  introduces a rank constraint on $B$, so that~\eqref{ls-loss}
is minimized subject to the constraint  $\rank(B) \le r$, where $r< \min(p,q)$. The solution is
$\widehat{B}_{\RRR}^T=\widehat{B}_{\OLS}^T U_r U_r^T$, where $U_r$ are the first $r$
singular vectors of $\widehat{Y}^T=\widehat{B}_{\OLS}X^T$ [see, e.g., Reinsel and Velu
(1998)].

Reduced-rank regression  has attracted attention as a regularization method by introducing a shrinkage penalty on $B$. Moreover, it is used as a dimensionality reduction method as it constructs latent factors in the predictor space that explain the variance of the responses.

Anderson (1951) obtained the likelihood-ratio test of the hypothesis that the rank of $B$ is a given number and derived the associated asymptotic theory under
the assumption of normality of $Y$ and non-stochastic $X$.  Under the assumption of joint normality of $Y$ and $X$, Izenman (1975) obtained the asymptotic distribution of the estimated reduced-rank regression coefficient matrix and drew connections between reduced-rank regression, principal component analysis  and correlation analysis. Specifically, principal component analysis coincides with reduced-rank regression  when $Y=X$ (Izenman (2008)). The monograph
by Reinsel and Velu (1998) contains a comprehensive survey of the theory and history of reduced rank regression, including in time series, and its many applications.

Despite the application potential of reduced-rank regression, it has received limited attention in generalised linear models. Yee and Hastie (2003) were the first to introduce reduced-rank regression to the class of multivariate generalised linear models, which covers a wide range of data types for both the response and predictor vectors, including categorical data. Multivariate or vector generalised linear models is the topic of Yee's (2015) book, which is accompanied by the  associated \texttt{R} packages \texttt{VGAM} and \texttt{VGAMdata}. Yee and Hastie (2003) proposed an alternating estimation
algorithm, which was shown to result in the maximum likelihood estimate of the parameter matrix in reduced-rank multivariate generalised linear models by Bura, Duarte and Forzani (2016). Asymptotic theory for the restricted rank maximum likelihood estimates of the parameter matrix in multivariate GLMs has not been developed yet.

In general, a maximum likelihood estimator  is a concave M-estimator in the sense that it  maximizes the empirical mean of  a concave criterion  function.
Asymptotic theory for M-estimators defined through a concave function has received much attention.  Huber (1967), Haberman  (1989)  and Niemiro (1992) are among the classical references.  More recently, Hjort and Pollard  (2011) presented a unified  framework for the statistical theory of M-estimation for convex criterion functions that are minimized over open convex sets of a Euclidean space.
Geyer (1994) studied M-estimators restricted to a closed subset of a Euclidean space.

The rank restriction in reduced rank regression imposes constraints that have not been studied before in M-estimation as they result in neither convex nor closed parameter spaces.
In this paper we (a) develop M-estimation theory for concave criterion functions, which are maximized over parameters spaces that are neither convex nor closed, and (b) apply the results from (a) to obtain asymptotic theory for reduced rank regression estimators in generalised linear models.
Specifically, we derive the asymptotic  distribution and properties of maximum likelihood estimators in  reduced-rank multivariate generalised linear models where both the response and predictor vectors have a joint distribution. The asymptotic theory we develop covers reduced-rank regression for linear models as a special case. We show the improvement in inference the asymptotic theory offers via analysing the data set Yee and Hastie (2003) analysed.  

Throughout, for a function $f : \real^q \to \real$, $\nabla f(x)$ denotes the
row vector $\nabla f(x) =(\partial f(x) /\partial x_1, \ldots, \partial f(x) /\partial x_q)$,
$\dot f(x)$ stands for the column vector of derivatives, while $\nabla^2 f(x)$ denotes the symmetric matrix of
second order derivatives. For a vector valued function $f : \real^{q_1} \to \real^{q_2}$, $\nabla f$ denotes the $q_2 \times q_1$ matrix,
%\[
%\nabla_{\f}\f= \begin{pmatrix}
%\frac{\partial f_1}{\partial \phi_1} & \frac{\partial f_1}{\partial \phi_1} & \cdots & \frac{\partial f_{q_{1}}}{\partial \phi_q} \\
%\frac{\partial f_1}{\partial \phi_2} & \frac{\partial f_2}{\partial \phi_2} & \cdots & \frac{\partial f_{q_{1}}}{\partial \phi_q}\\
%\vdots & \vdots & \vdots & \vdots \\
%\frac{\partial f_1}{\partial \phi_{q_2}} & \frac{\partial f_2}{\partial \phi_{q_2}} & \cdots & \frac{\partial f_{q_{1}}}{\partial \phi_{q_2}}
%\end{pmatrix}
%\]
\[
\nabla f(x)= \begin{pmatrix}
\frac{\partial f_1}{\partial x_1} & \frac{\partial f_1}{\partial x_2} & \cdots & \frac{\partial f_1}{\partial x_{q_1}} \\
\frac{\partial f_2}{\partial x_1} & \frac{\partial f_2}{\partial x_2} & \cdots & \frac{\partial f_2}{\partial x_{q_1}}\\
\vdots & \vdots & \vdots & \vdots \\
\frac{\partial f_{q_2}}{\partial x_1} & \frac{\partial f_{q_2}}{\partial x_2} & \cdots & \frac{\partial f_{q_{2}}}{\partial x_{q_1}}
\end{pmatrix}
\]

\section{M-estimators }\label{M-estimators}

Let  $Z$ be a random  vector taking values in a measurable space $\mathcal Z$ and
distributed according to the law $P$. We are interested in estimating  a finite dimensional  parameter $\xi_0=\xi_0(P)$ using $n$ independent and identically
distributed copies $Z_1,Z_2, \ldots, Z_n$ of $Z$. In the sequel, we use  $Pf$ to denote the mean  of $f(Z)$; i.e.,  $Pf=\E[f(Z)]$.

Let $\Xi$ be  a subset of an Euclidean space
and $m_\Xi: Z \to \real$ be a known function. Assume that  the parameter of interest $\xi_0$ is the maximizer of the map $\xi \mapsto P m_{\xis}$ defined on  $\Xi$.
%$\real^k$.
One can estimate  $\xi_0$ by maximizing an empirical version of the optimization criterion. Specifically, given  a known function  $m_{\xis}:\mathcal{Z} \mapsto \mathbb{R}$,  define
\begin{equation}
\label{MyMn}
 \MM(\xi):=Pm_{\xis}\; \quad\hbox{and}\quad \MMn(\xi):=P_n m_{\xis}\;,
\end{equation}
where, here and throughout, $P_n$ denotes the empirical mean operator $P_n V=n^{-1}\sum_{1=1}^nV_i$. Hereafter,  $\MMn(\xi)$ and $P_n m_{\xis}$ will be used interchangeably as the criterion function, depending on which of the two is appropriate in a given setting.

Assume that $\xi_0$ is the unique maximizer of the deterministic function  $M$ defined in~\eqref{MyMn}. An \emph{M-estimator} for the criterion function $\MMn$ over $\Xi$ is defined  as
\begin{equation}
\label{M-estimator}
\widehat{\xi}_n=\widehat{\xi}_n(Z_1,\ldots,Z_n)\quad\hbox{maximizing}\quad
\MMn(\xi)=\frac{1}{n} \sum_{i=1}^nm_{\xis}(Z_i) \quad  \mbox{over $\Xi$ }.
 \end{equation}
If  the maximum of the criterion function $\MMn$ over $\Xi$ is  not attained but the supremum of $M_n$
over $\Xi$ is finite, any  value $\widehat{\xi}_n$ that almost maximizes the criterion function, in the sense that it satisfies
\begin{equation}\label{def max}
\MMn(\widehat{\xi}_n) \;\ge \;\sup_{\xis \in \Xi}
\MMn (\xi )-A_n
\end{equation}
for $A_n$ small, can be used instead.

\begin{definition}\label{ws-M}
An estimator $\widehat{\xi}_n $ that satisfies~(\ref{def max}) with $A_n=o_p(1)$   is called a weak M-estimator
 for the criterion function $\MMn$ over $\Xi$. When $A_n= o_p (n^{-1})$, $\widehat{\xi}_n$ is called a strong
M-estimator.
\end{definition}

Proposition \ref{consistency-new} in the Appendix lists the conditions for the existence, uniqueness and strong consistency of an M-estimator, as defined in~(\ref{M-estimator}),
 when $\MMn$ is concave and the parameter space is convex. Under regularity conditions, as those stated in Theorem 5.23 in van der Vaart (2000), the asymptotic expansion and distribution of a consistent strong  M-estimator [see Definition \ref{ws-M}] for $\xi_0$ is given by
 \begin{align}\label{infl1}
\sqrt{n} (\widehat{\xi}_n - \xi_0) &=
\frac{1}{\sqrt{n}} \sum_{i=1}^n IF_{\xis_0}(Z_i)+ o_p(1),
\end{align}
where $IF_{\xis_0}(Z_i)=-V_{\xis_0}^{-1} \dot{m}_{\xis_0}(Z_i) $, and $V_{\xis_0}$ is the   nonsingular symmetric second derivative matrix of $M(\xi)$ at $\xi_0$ .

\subsection{Restricted M-estimators}\label{restricted-M-estimators}

We now consider the optimization of  $\MMn$ over $\Xi^{\res}\subset \Xi$,  where  $\Xi^{\res}$ is the image of a function that is not necessarily injective. Specifically, we restrict the optimization problem to the set $\Xi^{\res}$ by requiring:
\begin{condition}
\label {C1}
%\item[\ref {C1}]
There exists an open set  $\Theta \subset \real^q $ and a map $g: \Theta \rightarrow \Xi$  such that $\xi_0 \in g (\Theta)=\Xi^{\res}$.
\end{condition}
Even when an M-estimator  for the unrestricted problem as defined in~(\ref{M-estimator}) exists, there is no a priori guarantee that the supremum is attained
when considering  the restricted problem.  Nevertheless, Lemma~\ref{existence-restricted}  establishes the existence of a restricted strong M-estimator, regardless of whether the original M-estimator is a real maximiser, weak or strong. All proofs are provided in the Appendix.

\begin{lemma}\label{existence-restricted}
Assume  there exists a weak/strong M-estimator $\widehat\xi_n$ for the criterion function $\MMn$  over $\Xi$. If Condition \ref {C1} holds,
then  there exists  a strong M-estimator $\xires$ for the criterion function $\MMn$ over $\Xi^{\res}$.
\end{lemma}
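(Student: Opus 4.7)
The plan is to exploit two simple observations. First, Condition \ref{C1} guarantees that $\Xi^{\res}$ is non-empty, since it contains $\xi_0$. Second, the existence of a weak M-estimator $\widehat{\xi}_n$ over $\Xi$ forces $\sup_{\xis \in \Xi} M_n(\xi)$, and hence the smaller quantity $\sup_{\xis \in \Xi^{\res}} M_n(\xi)$, to be finite with probability one. Once these are in hand, a strong M-estimator over $\Xi^{\res}$ falls out of the very definition of supremum by approximating it within a deterministic $o(n^{-1})$ tolerance.

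For the finiteness step, I would rearrange the weak M-estimator inequality
$$
M_n(\widehat{\xi}_n) \;\ge\; \sup_{\xis \in \Xi} M_n(\xi) - A_n, \qquad A_n = o_p(1),
$$
to obtain $\sup_{\xis \in \Xi} M_n(\xi) \le M_n(\widehat{\xi}_n) + A_n$. Because $M_n(\widehat{\xi}_n) = n^{-1}\sum_{i=1}^n m_{\widehat{\xi}_n}(Z_i)$ is a finite real-valued sum and $A_n$ is finite-valued, the right-hand side is almost surely finite. Since $\Xi^{\res} \subset \Xi$, the quantity $s_n := \sup_{\xis \in \Xi^{\res}} M_n(\xi)$ is also almost surely finite, and bounded below by $M_n(\xi_0) > -\infty$ by Condition \ref{C1}. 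Note that this argument is insensitive to whether $\widehat{\xi}_n$ is a genuine maximizer, a weak M-estimator, or a strong one: only the finiteness of $M_n(\widehat{\xi}_n) + A_n$ is used.

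For the selection step, the definition of supremum allows me to pick $\xires \in \Xi^{\res}$ with
$$
M_n(\xires) \;\ge\; s_n - n^{-2}.
$$
Since $n^{-2}$ is deterministic and satisfies $n^{-2} = o(n^{-1})$, the estimator $\xires$ meets the definition of a strong M-estimator over $\Xi^{\res}$ in the sense of Definition \ref{ws-M}. The only delicate ingredient is ensuring that $\xires$ can be chosen as a measurable function of $(Z_1,\dots,Z_n)$, so that it genuinely qualifies as an estimator; I expect this to be the main obstacle. It can be handled by a standard measurable selection theorem, or more concretely by re-casting the problem on the open Euclidean parameter set $\Theta$ from Condition \ref{C1}: the map $\theta \mapsto M_n(g(\theta))$ is jointly measurable in $(\theta,Z_1,\dots,Z_n)$, so one can measurably select a near-maximizer $\theta_n \in \Theta$ and set $\xires = g(\theta_n) \in \Xi^{\res}$.
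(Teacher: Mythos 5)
Your proposal is correct and follows essentially the same route as the paper: the finiteness of $\sup_{\xis\in\Xi^{\res}}\MMn(\xi)$ is inherited from the unrestricted problem via the weak M-estimator inequality, and one then selects a near-maximizer within the deterministic tolerance $n^{-2}=o(n^{-1})$, which satisfies Definition \ref{ws-M}. The only cosmetic difference is that the paper works on the events $\Omega_n=\{\sup_{\xis\in\Xi^{\res}}P_n m_{\xis}<\infty\}$ with $\pr(\Omega_n)\to 1$ (setting $\xires\doteq 0$ off $\Omega_n$) rather than arguing almost-sure finiteness, and it does not address the measurable-selection point you raise, which is a reasonable extra care rather than a divergence in method.
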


Proposition~\ref{restricted-consistency} next establishes the existence and consistency of a weak restricted  M-estimator sequence when $m_\xi(z)$ is a concave function in $\xi$
under the same assumptions as those of the unrestricted problem, stated in Proposition \ref{consistency-new} in the Appendix.

\begin{proposition}\label{restricted-consistency}
Assume that Condition~\ref {C1}  holds. Then, under the  assumptions of Proposition \ref{consistency-new} in the Appendix,
%{\color{black}{pondria under the assumptions of Proposition \ref{consitency-new}} sacaria lo de convexity assumption}
there exists a strong M-estimator of the restricted problem over $\Xi^{\res}$.
Morevoer, any strong M-estimator of the restricted problem converges to $\xi_0$ in probability.
\end{proposition}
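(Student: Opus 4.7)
The plan is to separate existence from consistency. \textbf{Existence} is immediate: under the hypotheses of Proposition~\ref{consistency-new} the unrestricted problem admits a strong M-estimator $\widehat{\xi}_n$ over $\Xi$, and Condition~\ref{C1} then allows me to invoke Lemma~\ref{existence-restricted} to obtain a strong M-estimator $\xires$ of the restricted problem over $\Xi^{\res}$.

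For \textbf{consistency} I plan a sandwich-then-localise argument. Let $\xires$ be any strong M-estimator over $\Xi^{\res}$. Because $\xi_0 \in \Xi^{\res}$ by Condition~\ref{C1}, definition~\eqref{def max} yields $\MMn(\xires) \geq \MMn(\xi_0) - o_p(n^{-1})$, and since $\xires \in \Xi$ one also has $\MMn(\xires) \leq \MMn(\widehat{\xi}_n)$. The strong law of large numbers gives $\MMn(\xi_0) \to M(\xi_0)$, and the consistency of $\widehat{\xi}_n$ from Proposition~\ref{consistency-new}, combined with the local uniform convergence on compacta that is automatic for concave empirical averages with pointwise convergence, gives $\MMn(\widehat{\xi}_n) \to M(\xi_0)$. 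Squeezing yields $\MMn(\xires) \to M(\xi_0)$ in probability.

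To transfer this value convergence into $\xires \to \xi_0$, I plan a concavity trick that sidesteps the fact that $\Xi^{\res}$ is neither convex nor closed. Fixing $\epsilon > 0$, on the event $\|\xires - \xi_0\| > \epsilon$ I set $t_n = \epsilon/\|\xires - \xi_0\| \in (0,1]$ and
\[
\xi_n^{\ast} = (1 - t_n)\xi_0 + t_n \xires,
\]
so that $\xi_n^{\ast}$ lies in $\Xi$ by convexity of $\Xi$ (even though it need not lie in $\Xi^{\res}$) and on the compact sphere $S_\epsilon = \{\xi : \|\xi - \xi_0\| = \epsilon\}$. Concavity of $\MMn$ then yields
\[
t_n \MMn(\xires) + (1 - t_n)\MMn(\xi_0) \;\leq\; \MMn(\xi_n^{\ast}) \;\leq\; \sup_{\xi \in S_\epsilon} \MMn(\xi).
\]
Uniqueness of $\xi_0$ as maximiser of the continuous concave function $M$ produces $\delta := M(\xi_0) - \sup_{\xi \in S_\epsilon} M(\xi) > 0$, and uniform convergence of $\MMn$ to $M$ on the compact set $S_\epsilon$ bounds the right-hand side above by $M(\xi_0) - \delta + o_p(1)$. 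The left-hand side equals $M(\xi_0) + o_p(1)$ by the previous step, forcing $\delta \leq o_p(1)$, which is a contradiction; hence $\pr(\|\xires - \xi_0\| > \epsilon) \to 0$.

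The \textbf{main obstacle} is precisely this last step. Because $\Xi^{\res}$ is neither convex nor closed, the textbook well-separated-maximum argument cannot be executed inside $\Xi^{\res}$, nor can $\xires$ be confined a priori to a compact subset of $\Xi^{\res}$. The workaround is to construct the auxiliary point $\xi_n^{\ast}$ in the ambient convex set $\Xi$ and exploit concavity of $\MMn$ along the whole segment from $\xi_0$ to $\xires$, so that the localisation benefits of convexity are imported from $\Xi$ even though the estimator lives in the more complicated set $\Xi^{\res}$.
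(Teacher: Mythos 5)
Your proposal is correct. The existence half is exactly the paper's: invoke Proposition~\ref{consistency-new} for the unrestricted maximiser and then Lemma~\ref{existence-restricted} under Condition~\ref{C1}. The consistency half, however, is executed differently. The paper argues deterministically with $\zeta_n(\xi)=\MMn(\xi)-\MMn(\xi_0)$: it shows $\zeta_n(\xires)\ge -A_n>-\delta(\varepsilon_0)>\sup_{\Vert\xis-\xis_0\Vert=\varepsilon_0}\zeta_n(\xi)$ and then proves that the supremum over the sphere equals the supremum over the whole exterior region, by joining any exterior point to the \emph{unrestricted} maximiser $\widehat\xi_n$ (already known to lie inside the ball) and using concavity along that segment; randomness is handled at the end by an almost-sure subsequence argument. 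You instead draw the segment from $\xi_0$ through $\xires$, evaluate $\MMn$ at its crossing with the sphere $S_\epsilon$, and compare a convex combination of $\MMn(\xires)$ and $\MMn(\xi_0)$ with $\sup_{S_\epsilon}\MMn$, working with $o_p(1)$ bounds directly. This buys a localisation step that does not use the location of $\widehat\xi_n$ at all (it enters only through existence), and in fact your preliminary squeeze $\MMn(\xires)\to\MM(\xi_0)$ is dispensable: the near-maximality $\MMn(\xires)\ge\MMn(\xi_0)-o_p(n^{-1})$ already lower-bounds the convex combination by $\MMn(\xi_0)-o_p(1)$, so the upper bound via $\MMn(\widehat\xi_n)$ is harmless but unnecessary. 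Two small points of hygiene: take $\epsilon$ small enough that the closed ball $B[\xi_0,\epsilon]$ lies in the open set $\Xi$ (so that $\sup_{S_\epsilon}\MM<\MM(\xi_0)$ makes sense and the Glivenko--Cantelli/convexity uniform convergence applies), which suffices since consistency only requires arbitrarily small $\epsilon$; and the final step is cleaner phrased as $\pr(\Vert\xires-\xi_0\Vert>\epsilon)\le\pr(2R_n\ge\delta)\to 0$, where $R_n$ collects the $o_p(1)$ errors, rather than as the contradiction ``$\delta\le o_p(1)$''.
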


We derive next the asymptotic distribution  of $\xires=g(\widehat \theta_n)$, with $ \widehat \theta_n \in \Theta$. The constrained estimator $\xires$ is well
defined under Lemma~\ref{existence-restricted}, even when $\widehat \theta_n$ is not uniquely determined. If $\widehat \theta_n$  were unique and $\sqrt{n}(\widehat \theta_n - \theta)$ had an asymptotic distribution, one could use a Taylor series expansion for $g$ to derive asymptotic results for $\xires$. Building on this idea, Condition \ref {C2} introduces a parametrization of a neighborhood of $\xi_0$ that allows applying standard tools in order to  obtain the asymptotic distribution of the restricted M-estimator.

\begin{condition} \label{C2}
%\item[(C.2)]
Given $\xi_0\in g(\Theta)$, there exist an open set $\mathcal M$ in $\Xi^{\res}$ with $\xi_0\in \mathcal M\subset g(\Theta)$, and $(\mathcal S, h)$, where $\mathcal S$ is an open set in $\real^{q_s}$, $q_s \leq q$,  and $h :\mathcal S \to \mathcal M$ is one-to-one, bi-continuous and twice continuously differentiable, with $\xi_0=h(s_0)$ for some $s_0\in \mathcal S$.
\end{condition}
Under the setting in Condition \ref {C2}, we will prove that $\widehat{s}_n=h^{-1}(\xires)$ is a  strong M-estimator for the
criterion function $P_nm_{h(s)}$ over $\mathcal S$. Then, we can apply  Theorem 5.23  of van der Vaart (2000) to obtain the asymptotic behavior of $\widehat{s}_n$, which, combined with a Taylor expansion of $h$ about $s_0$, yield a linear expansion for $\xires$. Finally, requiring Condition~\ref {C3}, which  relates the parametrizations $(\mathcal S,h)$ and $(\Theta, g)$, suffices to derive  the asymptotic distribution of $\xires$ in terms of $g$.

\begin{condition}\label{C3}
 %\item[(C.3)]
 Consider $(\Theta, g)$ as in Condition \ref {C1} and  $(\mathcal S, h)$ as in Condition \ref {C2}. For  each $\theta_0\in g^{-1}(\xi_0)$, $\spn \nabla g(\theta_0)=\spn \nabla h(s_0)$.
%Consider $(\mathcal S, h)$ as in \ref {C2} and let $\Tan$ denotes
% \textcolor{black}{ bla bla bla the linear space generated by }
% $\Tan=\spn \nablah(\s_0)$. For  each $\theta_0\in
% \g^{-1}(\xi_0)$, $\spn \nabla \g(\theta_0)=\Tan$.
\end{condition}
Condition \ref {C3} ensures that  $\Tan=\spn \nabla g(\theta_0)$ is well defined regardless of the fact that $g^{-1}(\xi_0)$ may contain multiple $\theta_0$'s. Moreover,
$\Tan$ also agrees with $\spn \nabla h(s_0)$. Consequently, the orthogonal projection $\Pi_{\xis_0(\Sigma)}$ onto $\Tan$ with respect to the  inner product defined by a definite positive matrix $\Sigma$ satisfies
\begin{eqnarray}\label{Pi General}
\Pi_{\xis_0(\Sigma)} &=& \nabla g(\theta_0)(\nabla g(\theta_0)^T\Sigma\ \nabla g(\theta_0))^{\dag}\nabla g(\theta_0)^T\Sigma\\
\nonumber  &=& \nabla h(s_0)( \nabla h(s_0)^T \Sigma  \nabla h(s_0))^{-1} \nabla h(s_0)^T\Sigma,
\end{eqnarray}
where $A^{\dag}$ denotes a generalised inverse of the matrix $A$.
The gradient of $g$ is not necessarily of full rank, in contrast to  the gradient of $h$.

\begin{proposition}\label{first}
Assume  that Conditions  \ref {C1},  \ref {C2} and  \ref {C3} hold.
Assume also that the unrestricted problem satisfies the regularity Conditions \ref{(a)}, \ref{(b)} and \ref{(c)} in the Appendix. %Theorem 5.23 of van de Vaart (2000), so that the linear expansion (\ref{infl1}) holds.
Then, any strong M-estimator $\widehat{\xi}_n^{\res}$ of the restricted problem that converges in probability to $\xi_0$ satisfies
\begin{align}\label{infl2}
\sqrt{n} (\widehat{\xi}_n^{\res} - \xi_0) &=
\frac{1}{\sqrt{n}} \sum_{i=1}^n \Pi_{\xis_0(-V_{\xi_0})} IF_{\xis_0}(Z_i)+ o_p(1),
\end{align}
{where $IF_{\xis_0}(Z_i)=-V_{\xis_0}^{-1} \dot{m}_{\xis_0}(Z_i) $  is the influence function of the unrestricted estimator defined in~(\ref{infl1}),   $V_{\xis_0}$ is the   nonsingular symmetric second derivative matrix of $M(\xi)$ at $\xi_0$, and  $\Pi_{\xis_0(-V_{\xi_0})}$ is defined according to  (\ref{Pi General}).}

Moreover, $\sqrt{n} (\xires - \xi_0)$ is asymptotically normal with mean zero and asymptotic variance
\begin{equation}
\label{normal asintotica}
\hbox{avar}\{\sqrt{n} (\xires - \xi_0)\}=
\Pi_{\xis_0(-V_{\xi_0})} V_{\xis_0}^{-1} P\left\{{\dot m}_{\xis_0} {\dot
m}_{\xis_0}^{T}\right\} V_{\xis_0}^{-1}\Pi_{\xis_0(-V_{\xi_0})}^T.
\end{equation}
\end{proposition}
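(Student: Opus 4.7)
The plan is to pull the optimization back to a local Euclidean chart via the parametrization $(\mathcal{S}, h)$ supplied by Condition \ref{C2}, apply the standard asymptotic theory for concave M-estimators on the open set $\mathcal{S}$, and then push the resulting expansion forward to $\Xi^{\res}$ through a Taylor expansion of $h$ about $s_0$. The role of Condition \ref{C3} is only to guarantee that the final expression, written intrinsically in terms of the projection onto $\Tan$, does not depend on which parametrization of $\Xi^{\res}$ is used.

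First, I would define $\widehat{s}_n = h^{-1}(\xires)$ on the event $\{\xires \in \mathcal{M}\}$, whose probability tends to one since $\xires\to\xi_0$ in probability and $\mathcal{M}$ is an open neighborhood of $\xi_0$. The first substantive step is to verify that $\widehat{s}_n$ is a strong M-estimator for $P_n m_{h(s)}$ over $\mathcal{S}$. Since $h$ is a bijection between $\mathcal{S}$ and $\mathcal{M}\subset\Xi^{\res}$,
\begin{equation*}
\sup_{s\in\mathcal{S}}P_n m_{h(s)} \;=\; \sup_{\xi\in\mathcal{M}}P_n m_{\xis} \;\le\; \sup_{\xi\in\Xi^{\res}}P_n m_{\xis},
\end{equation*}
and the strong-maximizer property of $\xires$ on $\Xi^{\res}$ transfers to $\widehat{s}_n$ on $\mathcal{S}$ once $\xires\in\mathcal{M}$. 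Bi-continuity of $h$ yields $\widehat{s}_n\to s_0$ in probability.

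Second, I would apply Theorem 5.23 of van der Vaart (2000) to $\widehat{s}_n$. The chain rule identifies the score with respect to $s$ at $s_0$ as $\nabla h(s_0)^T \dot{m}_{\xis_0}(z)$, and the Hessian of $s\mapsto M(h(s))$ at $s_0$ as $V_{s_0}=\nabla h(s_0)^T V_{\xis_0}\nabla h(s_0)$. This matrix is invertible, because $\nabla h(s_0)$ has full column rank (as $h$ is a bi-continuous bijection between open subsets of $\real^{q_s}$) and $V_{\xis_0}$ is nonsingular. The regularity Conditions \ref{(a)}, \ref{(b)} and \ref{(c)} in the Appendix are inherited by the reparametrized problem via the $C^2$ smoothness of $h$, so the theorem yields
\begin{equation*}
\sqrt{n}(\widehat{s}_n - s_0) \;=\; -V_{s_0}^{-1}\nabla h(s_0)^T \frac{1}{\sqrt{n}}\sum_{i=1}^n \dot{m}_{\xis_0}(Z_i) + o_p(1).
\end{equation*}
A Taylor expansion of $h$ at $s_0$, combined with the $O_p(n^{-1/2})$ rate for $\widehat{s}_n-s_0$, then gives
\begin{align*}
\sqrt{n}(\xires - \xi_0) &= \nabla h(s_0)\sqrt{n}(\widehat{s}_n - s_0) + o_p(1) \\
&= -\nabla h(s_0)\bigl(\nabla h(s_0)^T V_{\xis_0}\nabla h(s_0)\bigr)^{-1}\nabla h(s_0)^T \frac{1}{\sqrt{n}}\sum_{i=1}^n \dot{m}_{\xis_0}(Z_i) + o_p(1).
\end{align*}

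Third, I would specialize (\ref{Pi General}) to $\Sigma=-V_{\xis_0}$ (positive definite since $\xi_0$ maximizes $M$): the two minus signs cancel and the coefficient matrix above is exactly $\Pi_{\xis_0(-V_{\xis_0})}V_{\xis_0}^{-1}$, so the expansion becomes $n^{-1/2}\sum_{i=1}^n \Pi_{\xis_0(-V_{\xis_0})}IF_{\xis_0}(Z_i)+o_p(1)$, which is (\ref{infl2}). Condition \ref{C3} guarantees this projection equals the $g$-based projection in (\ref{Pi General}), so the formula is intrinsic to $\xi_0$. Asymptotic normality with variance (\ref{normal asintotica}) then follows by the classical central limit theorem applied to the i.i.d.\ mean-zero summands $IF_{\xis_0}(Z_i)$ combined with the fact that $\Pi_{\xis_0(-V_{\xis_0})}$ is a fixed linear map.

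The main obstacle will be the first step: rigorously showing that $\widehat{s}_n$ qualifies as a strong M-estimator over $\mathcal{S}$. The subtle point is that the strong-maximizer inequality for $\xires$ holds globally on $\Xi^{\res}$, whereas $\mathcal{S}$ corresponds only to a local patch $\mathcal{M}\subset\Xi^{\res}$; the consistency of $\xires$ must be leveraged to ensure the two suprema agree with $o_p(n^{-1})$ accuracy. Verifying the differentiability-in-quadratic-mean hypothesis of Theorem 5.23 for the reparametrized criterion from the original one is secondary but requires careful bookkeeping with the chain rule.
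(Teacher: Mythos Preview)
Your proposal is correct and follows essentially the same route as the paper's proof: pull back to $\mathcal S$ via $h^{-1}$, verify the reparametrized problem satisfies the hypotheses of van der Vaart's Theorem~5.23, apply it, and push forward by a first-order Taylor expansion of $h$. Your ``main obstacle'' is in fact not one: since $\mathcal M\subset\Xi^{\res}$ the inequality $\sup_{s\in\mathcal S}P_n m_{h(s)}\le \sup_{\xi\in\Xi^{\res}}P_n m_{\xis}$ is trivial, and combined with $P_n m_{h(\widehat s_n)}=P_n m_{\sxires}\ge \sup_{\Xi^{\res}}P_n m_{\xis}-o_p(n^{-1})$ on the event $\{\xires\in\mathcal M\}$ it immediately yields the strong-maximizer property for $\widehat s_n$ without any need to compare the two suprema.
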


As a side remark, we conjecture that for estimators that are maximizers of a criterion function under restrictions that satisfy Conditions  \ref {C1}, \ref {C2} and
\ref {C3}, when~(\ref{infl1}) is true, (\ref{infl2}) also holds. This can be important since the asymptotic distribution of restricted estimators will be derived directly from the asymptotic distribution of the unrestricted one.

\section{Asymptotic theory for the maximum likelihood estimator in reduced rank multivariate generalised linear models}\label{RRGLM}

In this section we show that maximum likelihood estimators in reduced-rank multivariate generalised linear models are restricted  strong M-estimators for the conditional log-likelihood. Using results in Section \ref{restricted-M-estimators}, we obtain the existence, consistency and asymptotic distribution of maximum likelihood estimators  in reduced-rank multivariate generalised linear models.

\subsection{Exponential Family}\label{exponential-family}
Let $Y=(Y_1,\ldots, Y_q)^T$ be a $q$-dimensional random vector and assume that its distribution belongs to a $k$- parameter canonical exponential family with pdf (pms)
\begin{eqnarray}\label{expfamily1}
  f_{\etas}(y) &=&  \exp\{\eta^T T(y) -\psi(\eta)\}h(y),
\end{eqnarray}
where $T(y)=(T_1(y),\ldots,T_k(y))^T $ is a vector of known real-valued functions, $h(y)\geq 0$ is a nonnegative known function and $\eta\in \mathbb{R}^{k}$ is the vector of natural parameters, taking values in
 \begin{equation}
 \label{natural-space}
 H=\{\eta\in \real^k: \int \exp\{\eta^T T(y)\}h(y)dy<\infty\},
 \end{equation}
where the integral is replaced by a sum when $Y$ is discrete.
The set $H$ of the natural parameter space is assumed to be  open and convex in $\real^k$, and $\psi$ a strictly convex function defined on $ H$.  Moreover, we assume $\psi(\eta)$ is convex and infinitely differentiable in $ H$. In particular, %(HERE REFERENCE: SABRI)
\begin{equation}\label{unito}
\nabla \psi (\eta)=\E_{\eta}^T(T(Y)) \quad
 \mbox{and}\quad
\nabla^2 \psi (\eta)=\var_{\eta} (T(Y)),\; \; \mbox{
for every $\eta\in  H$},
\end{equation}
where $\nabla^2 \psi $ is the $k \times k$ matrix of second derivatives of $\psi$.
Since $\psi$ is strictly convex, $\var_{\eta} (T(Y))$ is non-singular for every $\eta\in H$.

\subsection{Multivariate generalised linear models}\label{section1}
Let $Z=(X, Y)$ be a random vector, where now $Y \in \mathbb{R}^q$  is a multivariate response  and $X \in \mathbb{R}^p$ is a vector of predictors. The multivariate  generalised linear model postulates  that the conditional distribution of $Y$ given $X$
{belongs to some fixed exponential family and  hypothesizes that  the $k$-vector of natural parameters, which we henceforth call $\eta_x$ to emphasise the dependence on $x$,  depends linearly on  the vector of predictors.} Thus, the pdf (pms) of $Y\mid X=x$ is given by
\begin{equation}
\label{expfamily}
  f_{Y\mid X=x }(y) \;=\;  \exp\{\eta_{x}^T T(y)-\psi(\eta_{x})\}h(y),
\end{equation}
where $\eta_x \in \mathbb{R}^{k}$ depends linearly on $x$.

Frequently, a subset of the natural parameters depends on
$x$ while its complement does not.  The normal linear model with constant variance is such an example. To
accommodate this structure, we partition the vector $\eta_x$
indexing model  (\ref{expfamily}) into $\eta_{x 1}$ and $\eta_{x 2}$,
with $k_1$ and $k_2$ components, and assume that $H$, the natural parameter space of the exponential family, is $\real
^{k_1}\times H_2$, where  $ H_2$ is an open convex subset of
$\real^{k_2}$,
and assume that
%$\bold H=\bold H_1\times \bold  H_2$, where $\bold
%H_1=\real^{k_1}$  and  $\bold H_2$ is an open set in $\real^{k_2}$,
% According to this decomposition, we
% To contemplate this situation, we will subdivide the vector
% $\eta_x$
% in $\eta_{x1}$ and $\eta_{x2}$,
% with $k_1$ and $k_2$ components respectively,
%and postulate that
\begin{align}\label{modelo1}
 \eta_x&= \begin{pmatrix} \eta_{x1} \\ \eta_{x2} \end{pmatrix}=
 \begin{pmatrix} \overline{\eta}_{1}+ \beta x\\
\overline{\eta}_{2}
\end{pmatrix}
\end{align}
where $\beta \in \mathbb{R}^{k_1 \times p}$, $\overline{\eta}_{1} \in \mathbb{R}^{k_1}$ and $\overline{\eta}_{2} \in  H_2$. %,  an open subset of $\real^{k_2}$.
%and $\overline{\eta}^{T}=(\overline{\eta}_{1}^{T},\overline{\eta}_{2}^{T})$ is the constant part of $\eta_{x1}$ and $\eta_{x2}$.
%We denote the true parameter value with
%\begin{align}\label{modelo1true}
% \eta_{0x}&= \begin{pmatrix} \eta_{0x1} \\ \eta_{0x2} \end{pmatrix}=
% \begin{pmatrix} \overline{\eta}_{01}+ \beta_0 x\\
%\overline{\eta}_{02}
%\end{pmatrix}
%\end{align}
%and let $\overline{\eta}_0^{T}=(\overline{\eta}_{01}^{T},\overline{\eta}_{02}^{T})$ denote the constant part of $\eta_{0x1}$ and $\eta_{0x2}$.
%We are interested in inference about $\beta_0$ and $\overline{\eta}_0$ based on
%Let $\xi=\left(\overline{\eta}_1^T,\vecc^T(\beta),\overline{\eta}_2^T \right)^{T}$
Let  $\xi=\left(\overline{\eta}_1^T,\vecc^T(\beta),\overline{\eta}_2^T \right)^{T} \in\Xi= \real^{k_1 }\times \real^{k_1   p} \times H_2$,  denotes a generic
vector and $\xi_0$ the true parameter.
% Note that $\Xi \subseteq \real ^K$, with $K=k_1(1+r)+k_2$.
Suppose $n$ independent and identically distributed copies of $Z=(X, Y)$ satisfying (\ref{expfamily}) and (\ref{modelo1}),
with true parameter vector $\xi_0$,  are available.
Given a realisation $z_i=(x_i,y_i)$, $i = 1,\ldots, n$, %of the random zample $\Z_1,\ldots, \Z_n$,
the conditional  log-likelihood, up to a factor that does not depend on the parameter of interest, is
\begin{equation}\label{log-lik}
\mathcal{L}_n(\beta; \overline{\eta} )=%\frac{1}{n}\sum_{i=1}^n \left\{(\overline{\eta}_1^{T}+(\beta x_i)^{T}, \overline{\eta}_2^{T})T(y_i) -\psi((\overline{\eta}_1^{T}+(\beta x_i)^{T}, \overline{\eta}_2^{T}))\right\}.
 \frac{1}{n}\sum_{i=1}^n \left\{\eta_{x_i}^{T}T(y_i)
 -\psi(\eta_{x_i})\right\}.
\end{equation}
Let
\begin{equation}\label{m de glm}
m_{(\betas; \overline{\etas})}(z) =%\left(\overline{\eta}_1^{T}+(\beta x)^{T}, \overline{\eta}_2^{T}\right)T(y) -\psi\left((\overline{\eta}_1^{T}+(\beta x)^{T}, \overline{\eta}_2^{T})\right).
 \eta_x^{T} T(y)-\psi (\eta_x ).
\end{equation}
By definition~(\ref{M-estimator}), the maximum likelihood estimator (MLE), if exits, of the parameter indexing model (\ref{expfamily}) subject to (\ref{modelo1}) is an M-estimator.

Theorem~\ref{teorema GLM} next establishes the existence,
consistency and asymptotic normality of  $\widehat\xi_n$,  the MLE  of
 $\xi_0$.

\begin{theorem}\label{teorema GLM}
Assume that  $Z=(X, Y)$ satisfies model  (\ref{expfamily}) subject to (\ref{modelo1})
with true parameter $\xi_0$.  Under regularity conditions \textcolor{black}{(\ref{concentrado}), (\ref{forprop1}), (\ref{nabla-psi-bounded}) and (\ref{hessiano})} in the Appendix,
the maximum likelihood estimate of $\xi_0$, $\widehat\xi_n$,  exists, is unique and converges in probability to $\xi_0$.
Moreover, $\sqrt{n}\;  (\widehat\xi_n-\xi_0)$  is asymptotically normal with covariance matrix
%\begin{equation}\label{W}
%W_{\xi_0}=\left[ \E\left\{F(X)^{T} %\nabla^{2}\psi(\eta_{x_o})F(X)\right\}\right]^{-1},
%\end{equation}
\begin{equation}\label{W}
W_{\xi_0}=\left[ \E\left\{F(X)^{T} \nabla^{2}\psi(F(X)\xi_0)F(X)\right\}\right]^{-1},
\end{equation}
where \[F(x)=\left(
            \begin{array}{cc}
            \left({{(1,x^T)}} \otimes I_{k_1}\right) & 0 \\
              0 & I_{k_2} \\
            \end{array}
          \right),\]
          %$\eta_{x_0}= %(\overline{\eta}_{01}^{T}+(\beta_0X)^{T},\overline{\eta}_{02})$
          and $\nabla^{2}\psi$
         {{was defined in (\ref{unito})}}

\end{theorem}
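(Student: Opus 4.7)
The MLE $\widehat\xi_n$ maximizes the conditional log-likelihood $\mathcal L_n$, which up to an additive constant equals $M_n(\xi)=P_n m_{\xis}$ for $m_{\xis}$ as in (\ref{m de glm}). My plan is to cast this problem in the unrestricted concave M-estimation framework of Section \ref{M-estimators}, apply Proposition \ref{consistency-new} to obtain existence, uniqueness and consistency, and then use the expansion (\ref{infl1}) together with a direct calculation of the score and information to derive the asymptotic normal limit. The key observation is that, writing $\eta_x=F(x)\xi$ with $F(x)$ as in the statement, $m_{\xis}(z)=\xi^T F(x)^T T(y)-\psi(F(x)\xi)$ is linear in $\xi$ minus the convex function $\psi$ composed with a linear map, hence concave in $\xi$.

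\textbf{Consistency.} I would first check that $\xi_0$ is the unique maximizer of $M(\xi)=Pm_{\xis}$ on $\Xi$. The Kullback--Leibler inequality applied conditionally on $X$ gives $\E[m_{\xis_0}(Z)-m_{\xis}(Z)\mid X]\ge 0$, with equality only when $F(X)\xi=F(X)\xi_0$ almost surely; the non-degeneracy encoded in condition (\ref{concentrado}) on $X$ then forces $\xi=\xi_0$. Concavity of $m_{\xis}$ together with the integrability and envelope conditions (\ref{forprop1}) and (\ref{nabla-psi-bounded}) allow Proposition \ref{consistency-new} to apply, yielding existence, uniqueness and consistency of $\widehat\xi_n$.

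\textbf{Asymptotic normality.} Differentiating $m_{\xis}$ twice and exploiting that $\eta_x$ depends linearly on $\xi$ through $F(x)$ gives
\[
\dot m_{\xis}(z)=F(x)^T\bigl[T(y)-\nabla\psi(F(x)\xi)^T\bigr], \qquad \nabla^2 m_{\xis}(z)=-F(x)^T\nabla^2\psi(F(x)\xi)F(x).
\]
By (\ref{unito}), $\E[T(Y)\mid X=x]=\nabla\psi(\eta_x)^T$ and $\var[T(Y)\mid X=x]=\nabla^2\psi(\eta_x)$, so conditioning on $X$ delivers $\E[\dot m_{\xis_0}(Z)]=0$ and the information identity $P\{\dot m_{\xis_0}\dot m_{\xis_0}^T\}=\E[F(X)^T\nabla^2\psi(F(X)\xi_0)F(X)]=-V_{\xis_0}$. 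Substituting into (\ref{infl1}) collapses the sandwich variance $V_{\xis_0}^{-1}P\{\dot m_{\xis_0}\dot m_{\xis_0}^T\}V_{\xis_0}^{-1}$ to $-V_{\xis_0}^{-1}=W_{\xi_0}$; combined with a central limit theorem for the score, this yields the claimed asymptotic normality.

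\textbf{Main obstacle.} The most delicate step is non-singularity of $V_{\xis_0}$, which is precisely assumption (\ref{hessiano}): it requires both strict convexity of $\psi$ on $H$ (so that $\nabla^2\psi$ is positive definite) and a full-rank condition on the second moment matrix of $(1,X^T)^T$, so that $\E[F(X)^T\nabla^2\psi(F(X)\xi_0)F(X)]$ does not degenerate after taking expectation. A secondary technicality is verifying the envelope and local-domination hypotheses that underpin Theorem 5.23 of van der Vaart (2000)---namely integrability of $\nabla\psi$ and $\nabla^2\psi$ evaluated at $F(X)\xi$ over a neighborhood of $\xi_0$---and this is exactly what conditions (\ref{forprop1}) and (\ref{nabla-psi-bounded}) are designed to supply.
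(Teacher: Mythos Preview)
Your proposal is correct and follows essentially the same route as the paper's proof: linearize $\eta_x=F(x)\xi$, use strict convexity of $\psi$ for strict concavity of $m_{\xis}$, invoke the conditional Kullback--Leibler inequality (the paper cites Lemma~5.35 of van der Vaart) together with (\ref{concentrado}) for uniqueness of $\xi_0$, apply Proposition~\ref{consistency-new}, and then collapse the sandwich variance via the information identity $P\{\dot m_{\xis_0}\dot m_{\xis_0}^T\}=-V_{\xis_0}$.

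One small correction in your ``Main obstacle'' paragraph: condition (\ref{hessiano}) is an \emph{integrability} bound on $F(X)^T\nabla^2\psi(F(X)\xi)F(X)$, needed so that $M(\xi)=Pm_{\xis}$ admits a second-order Taylor expansion; it does not by itself give non-singularity of $V_{\xis_0}$. The non-degeneracy you correctly identify---that $F(X)v\neq 0$ with positive probability for every $v\neq 0$---is in fact supplied by (\ref{concentrado}) (take $\xi=v$, $\overline\eta=0$), combined with the positive definiteness of $\nabla^2\psi$ coming from strict convexity.
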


\subsection{Partial reduced  rank multivariate  generalised linear models}

When the number of natural parameters or the number of predictors is large, the precision of the estimation and/or the interpretation of results can be adversely affected. A way to address this is to assume that the parameters live in a lower dimensional space. That is,  we assume that the vector of predictors can be partitioned as
$x=(x_1^T,x_2^T)^T$ with $x_1 \in \mathbb{R}^{r}$ and $x_2 \in
\mathbb{R}^{p-r}$, and  that the parameter corresponding to $x_1 $, $\beta_{1} \in \real^{k_1 \times r}$ has rank $d< \min \{k_1,r\}$.
%
%
%in such a way that the reduced rank model
%involves only the first component of the predictors; that is, a subset of the regressors have a reduced-rank representation.
In this way,
the natural parameters $\eta_x$ in  (\ref{expfamily})
are related to the predictors via
\begin{align}\label{modelo2}
 \eta_x&= \begin{pmatrix} \eta_{x1} \\ \eta_{x2} \end{pmatrix}=
 \begin{pmatrix} \overline{\eta}_{1}+ \beta_{1} x_1+ \beta_{2} x_2 \\
\overline{\eta}_{2}
\end{pmatrix}
\end{align}
where  $\beta_{1} \in \mathbb{R}^{k_1 \times r}_d$, the set of
matrices in $\real^{k_1\times p}$ of rank $d \le \min \{k_1,{ {r}}\}$, while $\beta_{2} \in \mathbb{R}^{k_1
\times (p-r)}$, and $\beta=(\beta_{1},\beta_{2})$.

Following Yee and Hastie (2003), we refer to the exponential conditional model
(\ref{expfamily}) subject to
the restrictions imposed in (\ref{modelo2}) as \textit{partial reduced rank multivariate generalised linear  model}. The reduced-rank multivariate generalised linear model is the special case with $\beta_2=0$ in the partial reduced rank multivariate generalised linear  model.

To obtain the asymptotic distribution of the M-estimators for this reduced model, we will show that Conditions \ref {C1}, \ref {C2} and \ref {C3} are satisfied for $\Xi^{\res}$, $(\Theta, g)$, $\mathcal M$ and $(\mathcal S, h)$, which are defined next. % to account for model (\ref{modelo2}).
To maintain consistency with notation introduced in Section \ref{restricted-M-estimators}, we vectorise each  matrix involved in the parametrisation of
our model  and reformulate accordingly the  parameter space for  each vectorised  object. With this understanding, we use the symbol $\cong$ to
indicate that a matrix space component in a  product space is identified with its image through the operator $\vecc:\real^{m\times n}\to \real^{mn}$.
 In the sequel, to keep the notation as simple as possible, we concatenate column vectors
without transposing them; i.e.,
we write $(a,b)$ for $(a^T, b^T)^T$. Moreover, we write $\xi=\left(\overline{\eta}_1,\beta,\overline{\eta}_2 \right)$, with the understanding that $\beta$  stands for $\vecc(\beta)$.

For the non-restricted problem, $\beta_{1}$ belongs to $\real^{k_1\times r}$, so that the entire parameter
$\xi=\left(\overline{\eta}_1 , \beta_1 , \beta_2 ,\overline{\eta}_2  \right)$  belongs to
\begin{equation}
 \label{Xi-example}
\Xi\cong\real^{k_1}\times \real^{k_1\times r}\times\real^{k_1\times(p-r)}\times H_2.
\end{equation}
However, for the restricted problem, we assume that  the true parameter $\xi_0=\left(\overline{\eta}_{01} , \beta_{01} , \beta_{02} ,\overline{\eta}_{02}  \right)$ belongs to
\begin{equation}\label{Xires-example}
\Xi^{\res}\cong\real^{k_1}\times \real_d^{k_1\times r}\times\real^{k_1\times(p- r)}\times H_2.
\end{equation}
Let
\begin{equation}\label{Theta-example}
\Theta \cong \real^{k_1}\times \left\{\real_d^{k_1\times d}\times \real_d^{d\times r}\right\}\times\real^{k_1\times(p- r)}\times
H_2
\end{equation}
and consider $g:\Theta \to \Xi^{\res}$, with $\left(\overline{\eta}_1,(S,T),\beta_2,\overline{\eta}_2\right)\mapsto
\left(\overline{\eta}_1,S T,\beta_2,\overline{\eta}_2\right)$.
Without loss of generality, we  assume  that $\beta_{01} \in \real^{k_1\times r}_{\firsts,d}$, the set of matrices in $\real^{k_1\times r}_d$ whose  first $d$ rows are linearly
independent. Therefore,
\begin{eqnarray}\label{beta 0}
\beta_{01} = \left(
\begin{array}{c} I_d
\\ A_0 \end{array} \right) B_0, & &\textnormal{ with } A_0 \in \real^{(k_1-d) \times d} \textnormal{  and  }B_0 \in \real^{d\times r}_d.
\end{eqnarray}
{This is trivial since $\beta_{01}\in \mathbb R_d^{k_1\times r}$ and its  first $d$ rows linearly independent. }
Consider
\begin{equation}\label{M-example}
\mathcal M\cong \real^{k_1}\times \real_{\firsts,d}^{k_1\times r}\times\real^{k_1\times(p- r)}\times
H_2, \quad\hbox{and so}\quad \xi_0\in \mathcal M\subset \Xi^{\res}.
\end{equation}
Finally, let
\begin{equation}
 \label{S-example}
 \mathcal S\cong \real^{k_1}\times \left\{ \real^{(k_1-d) \times d} \times \real^{d\times r}_d\right\}\times\real^{k_1\times(p- r)}\times H_2,
 \end{equation}
and $h: \mathcal S \to  \mathcal M$ be the map
\begin{equation}\label{pfcn}
\left(\overline{\eta}_1,(A,B),\beta_2,\overline{\eta}_2\right)\quad\mapsto \quad \left(\overline{\eta}_1, \left(
\begin{array}{c} B
\\A B \end{array} \right),\beta_2,\overline{\eta}_2\right).
\end{equation}

\begin{proposition}\label{condiciones de RR}
Conditions \ref {C1}, \ref {C2} and \ref {C3} are satisfied for $\xi_0 $, $\Xi$, $\Xi^{\res}$, $(\Theta, g)$ and $(\mathcal{S}, h)$ defined in~(\ref{Xi-example})--(\ref{pfcn})%, (\ref{Xires-example}), (\ref{Theta-example}), (\ref{beta 0}), (\ref{M-example}), (\ref{S-example}) and (\ref{pfcn})
, respectively.
\end{proposition}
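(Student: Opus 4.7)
The plan is to verify the three conditions in turn. Since $g$ and $h$ act as the identity on the coordinates $(\overline{\eta}_1,\beta_2,\overline{\eta}_2)$, throughout it suffices to focus on the $\beta_1$-component, so the verification reduces to statements about the rank-$d$ manifold $\real_d^{k_1\times r}$ and its two parametrizations at $\beta_{01}$.

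\emph{Condition \ref{C1}.} Openness of $\Theta$ follows because $\real_d^{k_1\times d}$ and $\real_d^{d\times r}$ are each cut out by the nonvanishing of at least one $d\times d$ minor, and $H_2$ is open by hypothesis. The inclusion $g(\Theta)\subset\Xi^{\res}$ is immediate since $ST$ has rank $d$ whenever $S,T$ both do, and the reverse inclusion is the standard full-rank factorization of a rank-$d$ matrix.

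\emph{Condition \ref{C2}.} The set $\mathcal M$ is open in $\Xi^{\res}$ because linear independence of the first $d$ rows of $\beta_1$ is the nonvanishing of some $d\times d$ minor of those rows; openness of $\mathcal S$ is analogous. The map $h$ is polynomial in $(A,B)$, hence $C^\infty$. Splitting $\beta_1\in\real_{\firsts,d}^{k_1\times r}$ into its top $d\times r$ block $\beta_1^{(1)}$ and bottom $(k_1-d)\times r$ block $\beta_1^{(2)}$, the inverse is $B=\beta_1^{(1)}$ and $A=\beta_1^{(2)}(\beta_1^{(1)})^\T(\beta_1^{(1)}(\beta_1^{(1)})^\T)^{-1}$; the Gram matrix $\beta_1^{(1)}(\beta_1^{(1)})^\T$ is invertible precisely on $\real_{\firsts,d}^{k_1\times r}$, so $h^{-1}$ is smooth on $\mathcal M$. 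Injectivity follows, and $h(s_0)=\xi_0$ for $s_0=(\overline{\eta}_{01},(A_0,B_0),\beta_{02},\overline{\eta}_{02})$.

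\emph{Condition \ref{C3}.} This is the main obstacle. Given $\theta_0=(\overline{\eta}_{01},(S_0,T_0),\beta_{02},\overline{\eta}_{02})\in g^{-1}(\xi_0)$, the identity $S_0T_0=\beta_{01}$ together with the rank constraints and (\ref{beta 0}) forces $S_0$ to have top $d\times d$ block an invertible matrix $M$ and bottom block $A_0M$, with $T_0=M^{-1}B_0$. Indeed, writing $S_0$ with top block $S_{01}$ and bottom block $S_{02}$, the first $d$ rows of $S_0T_0=\beta_{01}$ give $S_{01}T_0=B_0$; since $B_0$ has full row rank, $S_{01}$ is invertible and $T_0=S_{01}^{-1}B_0$, after which $S_{02}T_0=A_0B_0$ gives $S_{02}=A_0S_{01}$, so set $M=S_{01}$. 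The $\beta_1$-image of $\nabla g(\theta_0)$ is $\{dS\cdot T_0+S_0\cdot dT:dS\in\real^{k_1\times d},\,dT\in\real^{d\times r}\}$, while that of $\nabla h(s_0)$ is the space of top/bottom block pairs $(dB,\,dA\cdot B_0+A_0\cdot dB)$ with $dA\in\real^{(k_1-d)\times d}$ and $dB\in\real^{d\times r}$. Writing $dS$ with blocks $(dS_1,dS_2)$, the forward inclusion is exhibited by $dB=dS_1T_0+M\,dT$ and $dA=(dS_2-A_0dS_1)M^{-1}$, and the reverse by $dS_1=0$, $dS_2=dA\cdot M$, $dT=M^{-1}dB$. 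The resulting equality of spans is independent of the gauge $M$, and hence holds at every $\theta_0\in g^{-1}(\xi_0)$; this gauge-invariant matching of infinitesimal variations is the only step that requires genuine calculation, the rest being bookkeeping about the rank stratification.
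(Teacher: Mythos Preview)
Your proof is correct. For Conditions~\ref{C1} and~\ref{C2} your argument is essentially the paper's: the paper packages the openness and bicontinuity facts into separate lemmas (Lemmas~\ref{abierto}--\ref{M abierto}), but the content is the same, and your explicit formula for $h^{-1}$ coincides with the one in Lemma~\ref{p-bicont}.

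For Condition~\ref{C3} you take a genuinely different route. The paper computes $\nabla g(\theta_0)$ and $\nabla h(s_0)$ explicitly as Kronecker-product block matrices in vectorised coordinates, factors $\nabla g(\theta_0)$ through an invertible matrix $G$ to obtain $\spn\nabla h(s_0)\subset\spn\nabla g(\theta_0)$, and then closes the argument by a rank count, invoking an external result (Theorem~5 of Puntanen et al.) to show both Jacobians have rank $d(k_1+r-d)+k_1(p-r)+k$. You instead work directly at the level of matrix variations $dS\cdot T_0+S_0\cdot dT$ versus block pairs $(dB,\,dA\cdot B_0+A_0\cdot dB)$ and exhibit both inclusions by explicit substitutions in $dS_1,dS_2,dT,dA,dB$, bypassing vectorisation, Kronecker products, and the external rank theorem entirely. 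Your approach is more elementary and self-contained; the paper's is more mechanical but makes the dimension of the tangent space explicit, which is useful downstream---in particular it directly confirms that $\nabla h(s_0)$ has full column rank, a fact used in the proof of Proposition~\ref{first} even though it is not formally part of Condition~\ref{C2} as stated. In your version that full-rank property follows implicitly from the smooth invertibility of $h$, but you may wish to note it.
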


Under the rank restriction on $\beta_{1}$ in (\ref{modelo2}), the existence of the maximum likelihood estimate  cannot be guaranteed in the sense of an M-estimator as defined in (\ref{M-estimator}) with $m_{\xis}=m_{(\betas; \overline{\etas})}$ in (\ref{m de glm}), and $\Xi$ replaced by $\Xi^{\res}$ in (\ref{Xi-example}). However, using Lemma~\ref{existence-restricted} we can work with a strong M-estimator sequence for the criterion function $P_nm_{(\betas; \overline{\etas})}$ over $\Xi^{\res}$.
Theorem~\ref{teorema GLM RR} states our main contribution.

\begin{theorem}\label{teorema GLM RR}
Let $\xi_0=\left(\overline{\eta}_{01}, \beta_{01}, \beta_{02},\overline{\eta}_{02} \right) $ denote the true parameter value of $\xi=\left(\overline{\eta}_1, \beta_1 , \beta_2,\overline{\eta}_2 \right) $. Assume that  $Z=(X, Y)$ satisfies model  (\ref{expfamily}), subject to (\ref{modelo2}) with $\xi_0\in \Xi^{\res}$ defined in (\ref{Xires-example}). Then, there exists a strong maximising sequence for the criterion function $P_nm_{\xis}$ over $\Xi^{\res}$ \textcolor{black}{for  $m_{\xis}=m_{(\betas; \overline{\etas})}$ defined in (\ref{m de glm}).}
% for $m_{\xis}=m_{(\betas; \overline{\etas})}$
%defined in (\ref{m de glm-rr}).
Moreover, any weak M-estimator  sequence $\{\xires\}$  converges to $\xi_0$ in probability.

If  $\{\xires\}$ is a strong M-estimator  sequence, then $\sqrt{n}(\xires-\xi_0)$ is asymptotically normal with covariance matrix
\begin{eqnarray*}
  \hbox{avar}\{\sqrt{n} (\xires - \xi_0)\}
 &=&\Pi_{\xis_0(W_{\xis_0}^{-1})}W_{\xis_0}
\end{eqnarray*}
where $W_{\xis_0}$ is defined in (\ref{W})

\end{theorem}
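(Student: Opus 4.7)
The plan is to assemble the abstract machinery of Section~\ref{restricted-M-estimators} (existence via Lemma~\ref{existence-restricted}, consistency via Proposition~\ref{restricted-consistency}, asymptotic distribution via Proposition~\ref{first}) with the structural check carried out in Proposition~\ref{condiciones de RR} and the unrestricted GLM result of Theorem~\ref{teorema GLM}. The three assertions of the theorem (existence, consistency, asymptotic normality) then fall out in that order, with the covariance formula emerging from a short algebraic simplification driven by the information identity.

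For existence, Theorem~\ref{teorema GLM} supplies an unrestricted MLE, hence a strong M-estimator for $P_nm_{\xis}$ over $\Xi$; Proposition~\ref{condiciones de RR} verifies Condition~\ref{C1} with the explicit pair $(\Theta,g)$ from~(\ref{Theta-example}); Lemma~\ref{existence-restricted} then furnishes a strong M-estimator $\xires$ over $\Xi^{\res}$. For consistency, observe that $\xi\mapsto m_{\xis}(z)=\eta_x^{T}T(y)-\psi(\eta_x)$ is concave in $\xi$, since $\eta_x$ is affine in $\xi$ and $\psi$ is convex, so the regularity conditions listed in Theorem~\ref{teorema GLM} imply the hypotheses of Proposition~\ref{consistency-new}. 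Combined with Condition~\ref{C1}, Proposition~\ref{restricted-consistency} gives that every weak restricted M-estimator converges in probability to $\xi_0$.

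For the asymptotic distribution, Proposition~\ref{condiciones de RR} delivers Conditions~\ref{C1}, \ref{C2} and~\ref{C3} for the reduced-rank parametrisation, while the regularity conditions (\ref{(a)})--(\ref{(c)}) on the unrestricted problem are inherited from the exponential-family smoothness (differentiability of $\psi$ to all orders, local boundedness of $\nabla\psi$, and nonsingularity of the Fisher information $W_{\xis_0}^{-1}$) already used for Theorem~\ref{teorema GLM}. Proposition~\ref{first} then yields the linear expansion~(\ref{infl2}) and asymptotic normality with variance
$\Pi_{\xis_0(-V_{\xis_0})}V_{\xis_0}^{-1}P\{\dot{m}_{\xis_0}\dot{m}_{\xis_0}^{T}\}V_{\xis_0}^{-1}\Pi_{\xis_0(-V_{\xis_0})}^{T}$. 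Because the model is a regular exponential family, the information identity gives $P\{\dot{m}_{\xis_0}\dot{m}_{\xis_0}^{T}\}=-V_{\xis_0}$, and matching with Theorem~\ref{teorema GLM} forces $-V_{\xis_0}=W_{\xis_0}^{-1}$. Substituting reduces the covariance to $\Pi_{\xis_0(W_{\xis_0}^{-1})}W_{\xis_0}\Pi_{\xis_0(W_{\xis_0}^{-1})}^{T}$, and the general identity $\Pi_{\xis_0(\Sigma)}\Sigma^{-1}\Pi_{\xis_0(\Sigma)}^{T}=\Pi_{\xis_0(\Sigma)}\Sigma^{-1}$, which is immediate from formula~(\ref{Pi General}), applied with $\Sigma=W_{\xis_0}^{-1}$ collapses the expression to $\Pi_{\xis_0(W_{\xis_0}^{-1})}W_{\xis_0}$, matching the claim.

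The main obstacle is not conceptual but one of verification: matching the abstract regularity conditions required by Proposition~\ref{first} (envelope and dominated-convergence hypotheses, Lipschitz control of $\dot{m}_{\xis}$ and $\nabla^{2}m_{\xis}$, nonsingularity of $V_{\xis_0}$) against the specific GLM conditions (\ref{concentrado}), (\ref{forprop1}), (\ref{nabla-psi-bounded}) and (\ref{hessiano}) invoked for Theorem~\ref{teorema GLM}. Once this translation is in hand, existence, consistency and the asymptotic expansion compose mechanically, and the covariance identification reduces to the two-line computation sketched above.
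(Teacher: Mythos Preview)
Your proposal is correct and follows essentially the same route as the paper: assemble Theorem~\ref{teorema GLM} (unrestricted regularity and the identification $-V_{\xis_0}=W_{\xis_0}^{-1}$, $P\{\dot m_{\xis_0}\dot m_{\xis_0}^T\}=-V_{\xis_0}$) with Proposition~\ref{condiciones de RR} (Conditions~\ref{C1}--\ref{C3}), then feed everything into Lemma~\ref{existence-restricted}, Proposition~\ref{restricted-consistency} and Proposition~\ref{first}, simplifying the covariance via the projection identity from~(\ref{Pi General}). The paper's own proof is a terse version of exactly this argument; your only addition is spelling out the existence and consistency steps more explicitly and noting the information identity by name.
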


\begin{remark}
Since $\Pi_{\xis_0(W_{\xis_0}^{-1})}$ is a projection,
$\Pi_{\xis_0(W_{\xis_0}^{-1})}W_{\xis_0} \le W_{\xis_0} $.  That is, the eigenvalues of $W_{\xis_0} -\Pi_{\xis_0(W_{\xis_0}^{-1})}W_{\xis_0} $ are non-negative, so that using partial reduced-rank multivariate generalised linear models results in efficiency gain.
%For symmetric matrices $A$, $B$, $A\le B $ means that the all the eigenvalues of $B-A $ are positive definite
\end{remark}

\section{Application: Marital status in a workforce study}\label{application}
Yee and Hastie (2003)  analyse data from a self-administered questionnaire
collected in a large New Zealand workforce observational study conducted during
1992-1993. For homogeneity, the analysis was restricted to a subset of 4105 European
males with no missing values in any of the variables used.
Yee and Hastie were interested in
exploring whether certain lifestyle and psychological variables were associated with
marital status, especially separation/divorce. The response variable is $Y=$ marital status,
with levels $1 = $ single, $2  = $ separated or divorced, $3 =$ widower, and
$4 =$ married or living with a partner. The married/partnered are the reference
group. Data on  14 regressors were collected, 12 of which  are binary ($1/0$ for presence/absence, respectively). These have
been coded so that their presence is negative healthwise. Their goal was  to
investigate if and how these $12$ {\it unhealthy} variables  were related to $Y$, adjusting for age
and level of education. The variables are described in Table~\ref{vardescr}. 

\begin{table}[ht!]
\centering
\begin{tabular}{lp{11cm}}
  \toprule[1.5pt]
  \head{Variable Name} & \head{Description} \\\midrule
 \texttt{marital} &  Marital status. 1=single, 2=separated or divorced, 3=widower, and 4=married or
living with a partner\\
 \texttt{age30} & Age - 30, in years\\
 \texttt{logedu1} & $\log(1+$ years of education at secondary or high school)\\
 \texttt{binge} &  In the last 3 months what is the largest number of drinks that you had on any one day?
(1=20 or more, 0=less than 20)\\
 \texttt{smokenow} & Current smoker?\\
\texttt{sun} &  Does not usually wear a hat, shirt or suntan lotion when outside during summer\\
\texttt{nerves} &  Do you suffer from `nerves'?\\
\texttt{nervous} &  Would you call yourself a `nervous' person?\\
\texttt{hurt} &  Are your feelings easily hurt?\\
\texttt{tense} &  Would you call yourself tense or `highly strung'? \\
\texttt{miserable} &  Do you feel `just miserable' for no reason?\\
\texttt{fedup} &  Do you often feel `fed-up'?\\
\texttt{worry} &  Do you worry about awful things that might happen?\\
\texttt{worrier} &  Are you a worrier?\\
\texttt{mood} & Does your mood often go up and down?\\
  \bottomrule[1.5pt]
\end{tabular}
\caption{Variables used in the workforce study}
\label{vardescr}
\end{table}

A categorical response $Y$ taking values $1,2,3,4 $ with probabilities $\pr(Y=i)=p_i$ can be expressed as a multinomial vector to fit the generalised linear model   presented in this paper, $Y=(Y_1,Y_2,Y_3,Y_4)^T$, where $Y_i=1$ if $Y=i$ and $Y_i=0$ otherwise, and $\sum_{i=1}^{4}Y_i=1$. 
The pdf of $Y$ can be written as 
\begin{equation}
\label{multinomial}
 f_{Y}(y) \;=\;  
\exp\{\eta^T T(y)-\psi(\eta)\} h(y)
\end{equation}
where $y=(y_{1}, y_{2}, y_{3}, y_{4})$, $T(y) = (y_1, y_2, y_3)$,   $\eta=(\eta_1,\eta_2,\eta_3)\in\mathbb R^3$,  $\psi(\eta)=
1+\sum_{i=1}^3 \exp(\eta_i)$ and $h(y) = 1/({y_1! y_2!  y_3 !(1-y_1-y_2-y_3)! })$. The natural parameter  $\eta=(\eta_1,\eta_2,\eta_3)\in\mathbb R^3$, is related to the pdf of $Y$ through   the identity  $\eta_i=\log (p_i/p_4)$, $i=1,2,3$.

Let $X$ be the vector of predictor variables and consider $p_i=p_i(x)=\pr(Y_i=1 \vert X=x)$. The dependence of $p_i$ on $x$ will not be made explicit in the notation.
As in Yee and Hastie (2003) we fit a multinomial regression model 
 to $Y|X=x$, as in \eqref{multinomial}, with $\eta_x$ 

\begin{equation}
\eta_x=  \left(\begin{array}{c}
 \log(p_1/p_4) \\
 \log(p_2/p_4)\\
  \log(p_3/p_4)
\end{array}\right) = \overline{\eta} +  \beta x
\end{equation}
where $ \overline{\eta} \in\mathbb{R}^3$ is the intercept and $ \beta  \in \mathbb{R}^{3 \times 14}$ is the coefficient matrix, so that there are $3 \times 14 +3=42+3=45$   parameters to estimate.
When a multinomial linear model is  fitted to the data at level 0.05,  \texttt{age30} and \texttt{binge} are significant for $\log(p_1/p_4)$, \texttt{smokehow} and \texttt{tense} for $\log(p_2/p_4)$, and only \texttt{age30} is significant for $\log(p_3/p_4)$. 

We next fitted a partial reduced rank multivariate generalised linear model, where the two continuous variables, \texttt{age30} and \texttt{logedu1}, were not subject to restriction. That is, 
\begin{equation}\label{modelo multinomial reducido}
\eta_x   = \overline{\eta} + \beta  x = \overline{\eta} +  \beta_1 x_1  + \beta_2 x_2 ,
\end{equation}
where $x_2 $ represent the continuous variables and $x_1 $ the 12 binary predictors. The AIC criterion estimates the rank of $\beta_1$ in \eqref{modelo multinomial reducido} to be one [see  Yee and Hastie (2003)]. Using the asymptotic results from the current paper, Duarte (2016) developed a test based on Bura and Yang (2011) that also estimates the dimension to be 1. Therefore, in our notation, $q=4$, $k=k_1=3$, $p=14$, $r=2$, $d=1$, and $\beta_1 = AB$, $A : 3\times 1$ y $B: 1 \times 12$,
$\beta_2 :3 \times 2$ and $\overline{\eta}: 3 \times 1$. The rank restriction results in a drastic reduction in the total number of parameters from 45 to 24.

 The reduction in the estimation burden is also reflected in how tight  the confidence intervals are compared with those in the unrestricted model, as can be seen in Tables
 \ref{two}, \ref{three} and Table~2 in Yee and Hastie (2003).   As a consequence the variables
 \texttt{nervous}, \texttt{hurt}, which are not significant in the unrestricted generalised linear model, are significant in the reduced \eqref{modelo multinomial reducido}. Furthermore, some variables, such as \texttt{binge}, \texttt{smokenow}, \texttt{nervous} and \texttt{tense}, are now significant for all responses.  
 
 All significant coefficients are positive. These correspond to the variables \texttt{binge, smokenow, nervous, tense} and \texttt{hurt} only for widowers. Since the positive value of the binary variables indicates poor lifestyle and negative psychological characteristics, our analysis concludes that for men  with these features, the chance of being single, divorced or widowed is higher than the chance of being married, adjusting for age and education.
Also, the coefficients corresponding to the response $ \log (p_3 / p_4) $ are twice as large as those of $ \log (p_1 / p_4) $, suggesting the effect of the predictors differs in each group. All computations were performed using the \texttt{R} package \texttt{VGAM}, developed by Yee (2017).

 \begin{table}[h]
  \centering
\begin{small} 
\begin{tabular}{@{}lrrrr@{}}
 \toprule
 Variable & &$\log(p_1/p_4)$ & $\log(p_2/p_4)$ & $\log(p_3/p_4)$ \\
 \midrule
\texttt{Intercept}   \\
 & {\scriptsize GLM }           & -1.762 * &  -2.699 * &   -6.711 *\\
 &         & (0.377)   & (0.383)   &  (1.047)  \\
  & {\scriptsize PRR-GLM} & -1.573* & -2.921* & -6.123 *\\
   &       &(0.388)  & (0.396) & (1.106)\\
 \texttt{age30}   \\          
 & {\scriptsize GLM }               & -0.191 * &   0.012   &  0.086 * \\
 &         & (0.008)  &  (0.008)  & (0.023)  \\
   & {\scriptsize PRR-GLM}   & -0.190* &  0.012 & 0.077*\\
    &      &(0.008)  & (0.008) & (0.024)\\
\texttt{logedu1 }   \\       
& {\scriptsize GLM }  & 0.338  & -0.365 &  -0.089  \\
   &       & (0.225) &  (0.213) & (0.559)\\
    & {\scriptsize PRR-GLM}     & 0.254  & -0.316 &-0.198\\
  &        &(0.228)  & (0.214) & (0.566)\\
  \bottomrule
 \end{tabular}
 \end{small}
  \caption{Estimators with  standard errors for $\beta_1$  for the generalised linear model (first two rows) and the reduced generalised linear model (last two rows). $^*$ indicates significant at 5\% level.}
 \label{one}
 \end{table}

\begin{table}[ht!]
  \centering
\begin{footnotesize}
\begin{tabular}{@{}lrrrr@{}}
\toprule
 Variable & & $\log(p_1/p_4)$ & $\log(p_2/p_4)$ & $\log(p_3/p_4)$ \\
 \midrule
\texttt{binge}   \\
&  {\tiny GLM}  & 0.801*  & 0.318 &1.127\\
 &          &(0.143)  & (0.256) & (0.670)\\
 & {\tiny PRR-GLM }        & 0.569 * & 0.786 * & 1.114 *\\
 &          &(0.125)  & (0.196) & (0.409)\\
\texttt{smokenow}   \\       
& {\tiny GLM}     & 0.022 & 0.501 * & 0.654\\
&           &(0.126)  & (0.157) & (0.469)\\
& {\tiny PRR-GLM }    & 0.222 * &   0.306 * &  0.434 * \\
  &        & (0.088) & (0.119) & (0.208)\\
\texttt{sun}   \\ 
& {\tiny GLM}        & -0.066  & 0.120  & -0.088 \\
 &          &(0.122)  & (0.161) & (0.518)\\
  & {\tiny PRR-GLM }            & 0.011  & 0.015  & 0.021 \\
   &       & (0.084) & (0.116) & (0.164)\\
 \texttt{nerves}   \\
& {\tiny GLM}        & -0.102   &   0.123   &  -1.456\\
 &          &(0.138)  & (0.198) & (0.841)\\
 & {\tiny PRR-GLM }                    & -0.054   &   -0.074   &  -0.105\\
&          &  (0.101)& (0.139) &(0.197)\\
 \texttt{nervous}   \\
 & {\tiny GLM}       & 0.297 &     0.353     &   1.007 \\
&           &(0.169)  & (0.228) & (0.665)\\
 & {\tiny PRR-GLM }        & 0.312 *&     0.430 *    &   0.609 * \\
  &        & (0.124) & (0.168) & (0.291)\\
 \texttt{hurt}   \\    
 & {\tiny GLM}      & 0.184     &  0.210  &   0.483 \\
 &          &(0.126)  & (0.167) & (0.501)\\
 & {\tiny PRR-GLM }    & 0.180     &  0.248  &   0.352 * \\
   &       &(0.089)& (0.122) & (0.199)\\
\bottomrule
 \end{tabular}
 
\end{footnotesize}
 %\vspace{0.3cm}
 \caption{MLEs with their standard errors in parentheses for the full rank generalised linear model (first two  rows) and the partial reduced rank generalised linear model (last two rows). $^*$ indicates significant at 5\% level.}

 \label{two}
 \end{table}

 \begin{table}[ht!]
  \centering
\begin{footnotesize}
\begin{tabular}{@{}lrrrr@{}}
\toprule
 Variable & & $\log(p_1/p_4)$ & $\log(p_2/p_4)$ & $\log(p_3/p_4)$ \\
 \midrule
 \texttt{tense}   \\    
 & {\tiny GLM}      & 0.166 &    0.483  * &  1.108\\
 &          &(0.176)  & (0.214) & (0.612)\\
& {\tiny PRR-GLM }       & 0.302 *&    0.416 *   &  0.590 *\\
  &        & (0.122)& (0.163) &(0.284)\\
\texttt{miserable}   \\    
& {\tiny GLM}      & -0.050  &  0.128   &  -0.093\\
   &        &(0.138)  & (0.178) & (0.613)\\
& {\tiny PRR-GLM }   & 0.019  &  0.0268   &  0.038\\
 &         & (0.094)&(0.129) & (0.185)\\
\texttt{fedup }   \\
& {\tiny GLM}          & 0.112  &    0.249  &   -0.214\\
 &          &(0.122)  & (0.171) & (0.548)\\
& {\tiny PRR-GLM }    & 0.117  &    0.161  &   0.229\\
  &        & (0.094) & (0.129)& (0.185)\\
\texttt{worry }   \\
& {\tiny GLM}        & 0.113  & -0.102    & -0.548\\
  &        &(0.145)  & (0.209) & (0.818)\\
& {\tiny PRR-GLM }    & 0.003  & 0.004    & 0.005\\
  &        & (0.106) & (0.146) & (0.207)\\
\texttt{worrier }   \\   
& {\tiny GLM}    & -0.027 &-0.243  &  -0.548 \\
    &       &(0.131)  & (0.180) & (0.550)\\
& {\tiny PRR-GLM }   & -0.116 &-0.160  &  -0.227 \\
  &       & (0.092)& (0.128)& (0.193)\\
\texttt{mood}   \\          
& {\tiny GLM}         & -0.111 &   0.092   &  -0.193 \\
  &         &(0.123)  & (0.171) & (0.553)\\
 & {\tiny PRR-GLM }           & -0.037  &   -0.052   &  -0.073 \\
&         & (0.087) & (0.120) & (0.172)\\
\bottomrule
 \end{tabular}
 %\vspace{0.3cm}
 \caption{MLEs with their standard errors in parentheses for the full rank generalised linear model (first two  rows) and the partial reduced rank generalised linear model (last two rows). $^*$ indicates significant at 5\% level.}
 
\end{footnotesize}
 \label{three}
 \end{table}

\FloatBarrier
\section{Discussion}
With the exception of the work of Yee on vector generalised linear
models (VGLMs) (Yee and Wild, (1996), Yee and Hastie (2003),  Yee (2017)) reduced-rank regression has been almost exclusively restricted to data where the response variable is continuous. Estimation in reduced-rank multinomial logit models (RR-MLM) was studied in  Yee and Hastie (2003), but no distribution results for the estimators were obtained.   
In this paper we fill this gap by developing asymptotic theory for the restricted rank maximum likelihood estimates of the parameter matrix in multivariate GLMs.

To illustrate the potential impact of our results, we refer to the real data analysis example in Section~\ref{application}. In order to assess the significance of the predictors, Yee and Hastie (2003) 
calculate the standard errors for the coefficient matrix factors, $  A $ and
$ B $,  independently and can only infer about the significance of the components of the matrix $A$ and the components of the matrix $B$ separately. 
The asymptotic distribution for either estimator is obtained assuming that the other is fixed and known.
 In this way, they first analyse $ \nu =  B x_1 $ to check which predictors are significant and then $A \nu$ to examine how they influence each response.
 Their standard errors are ad-hoc and it is unclear what the product of
standard errors measures as relates to the significance of the
product of the components of the coefficient matrix $\beta_1=AB $.
Moreover, this practical ad-hoc approach cannot readily be extended when $d>1$.

Using  the results of Theorem \ref{teorema GLM RR}, we can obtain
the  errors of each component of the coefficient matrices $A$, $B $ simultaneously, and assess the statistical 
significance of each predictor on each response. Using the ad-hoc approach of  Yee and Hastie (2003), a predictor can only be found to be significant across all responses. For example,  Yee and Hastie (2003) find the predictor \texttt{hurt} to be significant for all three
groups (single, divorced/separated, widower). On the other hand, we can assess the significance of any response/predictor combination. Thus, we find \texttt{hurt} to be significant only for widower but not for single or separated/divorced men groups [see  Table \ref{two}].

\section{Proofs}\label{app}

 The consistency of M-estimators has long been established [see, for instance,  Theorem 5.7 in van der Vaart  (2000)]. The functions $\MM(\xi)$ and  $\MMn(\xi)$ are defined in (\ref{MyMn}).
Typically, the proof for the consistency  of M-estimators   assumes that $\xi_0$,  the parameter of interest,  is a \textit{well-separated} point of maximum of $M$, which is ascertained by assumptions (a) and (b) of Proposition~\ref{consistency-new}. Assumption (c) of Proposition~\ref{consistency-new} yields uniform consistency of $M_n$ as an estimator of $M$, a property needed in order  to establish the consistency of M-estimators.

\begin{proposition}\label{consistency-new}
Assume
 (a) $m_{\xis}(z)$ is a strictly concave function  in $\xi \in \Xi$, where $\Xi$ is a convex open  subset of a Euclidean space; (b) the function $M(\xi)$ is well defined for any $\xi\in \Xi$ and has a unique maximizer $\xi_0$; that is, $M(\xi_0)>M(\xi)$ for any $\xi\not=\xi_0$; and (c) for any compact set $\mathcal K$  in $\Xi$,
\begin{equation}\label{envelope}
 \E\left[ \sup_{\xi \in \mathcal K}\left\vert m _{\xis} (Z)\right\vert \right]<\infty.
\end{equation}
Then, for each $n\in {\mathbb N}$, there exists a unique M-estimator $\widehat{\xi}_n$  for the criterion function $\MMn$ over $\Xi$. Moreover, $\widehat{\xi}_n \to \xi_0$  a.e., as $n \to \infty$.
 \end{proposition}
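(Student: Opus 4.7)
The plan combines three ingredients: a pointwise strong law together with a Rockafellar-type uniform convergence theorem for concave functions, and a concavity-based coercivity argument that traps the argmax of $M_n$ in a small ball around $\xi_0$.

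First, I would establish that $M_n \to M$ uniformly on every compact subset of $\Xi$, almost surely. Applying assumption (c) to the singleton $\{\xi\}$ gives $\E|m_\xi(Z)|<\infty$, so the strong law of large numbers yields $M_n(\xi)\to M(\xi)$ a.s.\ pointwise. Each realisation of $M_n$ is concave on the open convex set $\Xi$ by (a), and $M$ is concave as well (from pointwise concavity of $m_\xi$ and finiteness of the expectation via (c)). Then the classical theorem that pointwise convergence of concave functions on an open convex set to a finite limit is automatically uniform on every compact subset (Rockafellar 1970, Theorem 10.8) upgrades the pointwise convergence to uniform convergence on each compact $K\subset\Xi$, almost surely.

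Next, I would fix $\epsilon>0$ small enough that the closed ball $\overline{B}_\epsilon(\xi_0)\subset\Xi$. Assumption (b), the continuity of the concave function $M$ on the interior of its domain, and compactness of $\partial B_\epsilon(\xi_0)$ together give
\[
\delta_\epsilon \;:=\; M(\xi_0) \;-\; \max_{\xi \in \partial B_\epsilon(\xi_0)} M(\xi) \;>\;0.
\]
Uniform convergence on $\overline{B}_\epsilon(\xi_0)$ then implies that, on an event of probability one, for all $n$ large enough,
\[
M_n(\xi_0) \;>\; M_n(\xi) + \tfrac{\delta_\epsilon}{3}\qquad \text{for every } \xi\in\partial B_\epsilon(\xi_0).
\]
A one-line concavity argument extends this separation to the whole complement of the ball: for any $\xi \in \Xi\setminus B_\epsilon(\xi_0)$, the segment from $\xi_0$ to $\xi$ crosses $\partial B_\epsilon(\xi_0)$ at a point $\xi^* = (1-t)\xi_0 + t\xi$ with $t\in(0,1)$, and concavity of $M_n$ gives $M_n(\xi^*) \geq (1-t)M_n(\xi_0) + tM_n(\xi)$, which together with $M_n(\xi^*) < M_n(\xi_0)$ forces $M_n(\xi) < M_n(\xi_0)$. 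Hence the supremum of $M_n$ over $\Xi$ is attained inside the compact set $\overline{B}_\epsilon(\xi_0)$; continuity of $M_n$ (automatic from concavity on an open set) then produces a maximiser, and strict concavity of $M_n$ (inherited from (a)) makes it unique. Because $\widehat\xi_n$ lies in $\overline{B}_\epsilon(\xi_0)$ eventually a.s.\ for every $\epsilon>0$, intersecting full-probability events over a sequence $\epsilon_k\downarrow 0$ delivers $\widehat\xi_n\to\xi_0$ almost surely.

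The main obstacle is the uniform convergence step: (c) on its own is merely an integrability condition, and without a structural input it would require additional equicontinuity or bracketing to yield uniform convergence on compacta. The concavity of $m_\xi$ in $\xi$ bypasses all of this via Rockafellar's theorem and is the key structural fact that makes the proof clean. Once uniform convergence on a single ball around $\xi_0$ is secured, the coercivity step that traps $\widehat\xi_n$ inside it and the resulting strong consistency follow as routine consequences of strict concavity.
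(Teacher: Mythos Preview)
Your argument is correct and follows the same overall architecture as the paper's proof: first secure uniform convergence of $M_n$ to $M$ on compacta, then use a concavity-based coercivity argument to trap the unique maximiser of $M_n$ inside an arbitrarily small ball around $\xi_0$. The coercivity step is essentially identical in both proofs.

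The one substantive difference lies in how local uniform convergence is obtained. The paper observes that, for each compact $\mathcal K\subset\Xi$, the class $\{m_\xi:\xi\in\mathcal K\}$ has an integrable envelope by (c) and is pointwise continuous in $\xi$ (as concave functions on an open set are continuous), and then cites Example~19.8 in van der Vaart to conclude the class is Glivenko--Cantelli. You instead obtain pointwise a.s.\ convergence from the strong law and upgrade it to local uniform convergence via Rockafellar's Theorem~10.8 on convergence of concave functions. Your route exploits concavity more directly and avoids any empirical-process machinery; the paper's route is a one-line citation once continuity is noted. One small technical point worth making explicit in your version: the pointwise SLLN gives a null set that depends on $\xi$, so you should pass through a countable dense subset of $\Xi$ and intersect the corresponding full-probability events before invoking Rockafellar's theorem, which only requires convergence on a dense set.
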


\begin{proof} %[Proof of Proposition \ref{consistency-new}]
For each compact subset $\mathcal K $ of $\Xi$, $\{m_{\xis}: \xi\in\mathcal K\}$
is a collection of measurable functions which, by assumption (c),  has an integrable envelope. Moreover,
for each fixed $z$, the map $\xi \mapsto m_{\xis}(z)$ is continuous, since it is concave and  defined on the open set $\Xi$.
As stated in Example 19.8 of van der Vaart (2000), these conditions  guarantee that the class is Glivenko-Cantelli. That is,
\begin{equation}\label{convergencia}
\mbox{pr}\left(\lim_{n\to \infty}\sup_{\xi \in \mathcal K}\vert P_n m _{\xis} -P m_{\xis}\vert= 0\right)=1.
\end{equation}
 We need to prove that there exist a unique maximizer of $M_n( \xi) = P_n \xi,$ and that it  converges to the maximizer of $M(\xi) = P m_{\xi}$.
We first consider the deterministic case ignoring  for the moment  that $\{M_n\}$ is a sequence of   random functions.

Since  $\xi_0$ belongs to  the open set $\Xi$, there exists  $\varepsilon_0>0$ such that the closed ball  $B[\xi_0,\varepsilon_0]$ is contained in $\Xi$.
Uniform convergence of $\{M_n\}$ to $M$ over $\mathcal K=\{\xi:\Vert \xi-\xi_0\Vert =\varepsilon_0\}$
%$B[\xi_0,\varepsilon_0]$  %on a set $\Omega_1$ of total probability
guarantees that
\begin{equation}\label{infimos}
\lim_{n\to\infty} \sup_{\Vert \xis-\xis_0\Vert=\varepsilon_0} \left\{ M_n(\xi)- M_n(\xi_0)\right\} = \sup_{\Vert \xis-\xis_0\Vert = \varepsilon_0}\left\{M(\xi)-
M(\xi_0)\right\}<0 \quad %\hbox{on $\Omega_1$}
\end{equation}
because $M(\xi_0)>M(\xi)$ for any $\xi\not=\xi_0$. Then, for $n\geq n_0(\varepsilon_0)$, %large enough,
\begin{equation}\label{adentro}
\sup_{\Vert \xis-\xis_0\Vert =\varepsilon_0} M_n(\xi)- M_n(\xi_0)<0.
\end{equation}
Let $\zeta_n(\xi)=M_n(\xi)- M_n(\xi_0)$.  Since $M_n$ is concave
and continuous, $\zeta_n$ attains its maximum over the compact set $B[\xi_0,\varepsilon_0]$, which we denote by $\widehat{\xi}_n$.
{Note that} $\zeta_n(\xi_0)=0$  and $\zeta_n$ is strictly smaller than zero in the boundary of the ball, as shown in (\ref{adentro}); {therefore, } we conclude that
$\widehat{\xi}_n\in B(\xi_0,\varepsilon_0)$, so that $\widehat{\xi}_n$ is a local maximum for $\zeta_n$.

%By the concavity of $\h_n$, $\widehat{\xi}_n$  is aglobal maximum. To prove this last assertion,
Let $\xi$ satisfy $\Vert \xi-\xi_0\Vert > \varepsilon_0$. The convexity of $\Xi$ implies there exists $t\in (0,1) $ such that $\widetilde{\xi}= (1-t)\xi_0 + t \xi$  satisfies $\Vert \widetilde{\xi}-\xi_0\Vert=\varepsilon_0$, and therefore
\begin{equation}\label{maximo-total}
\zeta_n(\widehat{\xi}_n)\geq \zeta_n(\xi_0)=  0 >\zeta_n(\widetilde{\xi})\geq t\zeta_n(\xi)+(1-t)\zeta_n(\xi_0)=t\zeta_n(\xi),
\end{equation}
implying that $\zeta_n(\xi)<0\leq \zeta_n(\widehat \xi_n)$. Therefore, the maximum $\widehat{\xi}_n \in B(\xi_0,\varepsilon_0)$ is global. The  strict concavity of $M_n$ guarantees that such global maximum is unique, thus  $\widehat\xi_n$ is the unique solution to the optimization problem in (\ref{M-estimator}). By
repeating this argument for any $\varepsilon<\varepsilon_0$, we prove the  convergence of the sequence $\{\widehat\xi_n\}$ to $\xi_0$.% a.e.

Turning to the  stochastic case, the uniform convergence of $M_n$ to $M$ over  $\mathcal K= B[\xi_0,\varepsilon_0]$  on a set $\Omega_1$, with $\pr (\Omega_1)=1$, as assumed in~(\ref{convergencia}), guarantees  the deterministic result can be  applied to any element of $\Omega_1$,
% $$\Omega_1=\bigcup_{n_0}\bigcap_{n\geq n_0} \left\{ \sup_{\Vert \xis-\xis_0\Vert
% =\varepsilon_0} \left\{ M_n(\xi)- M_n(\xi_0)\right\} <0\right\},$$
%with $P(\Omega_1)=1$,
which obtains the result.
\end{proof}

%\medskip

\begin{proof}[Proof of Lemma \ref{existence-restricted}]
Let   $\{\widehat  \xi_n\}$ be any   (weak/strong)  M-estimator  sequence  of the  unconstrained maximization problem.
Since  $\MMn(\widehat{\xi}_n) \ge \sup_{\xis \in \Xi} \MMn(\xi) - A_n$
with $A_n \rightarrow 0$, we have
% $\sup_{\xis \in \Xi} \MMn(\xi) \leq \MMn(\widehat{\xi}_n) +A_n$. Therefore,
\begin{equation}
1=\lim_{n \rightarrow \infty} \mbox{pr}\left(\sup_{\xis \in \Xi} P_n m_{\xis} <\infty\right) \leq \lim_{n \rightarrow \infty}  \mbox{pr}\left(\sup_{\xis\in \Xi^{\res} }P_n m_{\xis} <\infty\right).
\end{equation}
Define
\begin{equation}\label{existencerestricted}
 \Omega_n:=\left\{\sup_{\xis\in \Xi^{\res} }P_n m_{\xis}<\infty \right\}.
\end{equation}
For all $n$,  there exists $\xires$  such that
\begin{equation}
\MMn(\xires) \geq \sup_{\xis\in \Xi^{\res}} \MMn(\xi) -
\frac{1}{n^{2}}
\end{equation}
on $\Omega_n$. Let $\xires\doteq 0$ on $\Omega_n^{c}$. Then, since
$\mbox{pr}(\Omega_n)\rightarrow 1$, $\{\xires\}$ is a strong M-estimator for the criterion function $\MMn$ over $\Xi^{\res}$.
\end{proof}

\begin{proof}[Proof of Proposition \ref{restricted-consistency}]
In Proposition \ref{consistency-new} we have  established the existence of a unique maximizer  $\widehat\xi_n$ for the criterion function $M_n$ over $\Xi$.
We can now invoke Lemma \ref{existence-restricted} to guarantee
the existence of $\xires$, a strong M-estimator for the criterion function $M_n$ over $\Xi^{res}$. Let $\{\xires\}$ be any  strong M-estimator for the criterion function $\MMn$ over $\Xi^{\res}$. We start from the deterministic case:
\begin{equation}
 \label{deter}
 \MMn(\xires) \ge \sup_{\xis \in \Xi^{res}}
\MMn(\xi) -A_n,
\end{equation}
where $\MMn$ is defined  in (\ref{MyMn}) and   $A_n$ is a sequence of real numbers with $A_n\to 0$.
%, and $\widehat{\xi}_n$ exists thanks to Proposition~\ref{consistency-new}.
As in the proof of Proposition~\ref{consistency-new}, define $\zeta_n(\xi)=M_n(\xi)- M_n(\xi_0)$ to obtain that, for $\epsilon_0$ small enough,

\begin{equation}
  \sup_{\Vert \xis-\xis_0\Vert =\varepsilon_0}  \zeta_n (\xi)  \le \frac{1}{2}  \sup_{\Vert
\xis-\xis_0\Vert =\varepsilon_0 } \left\{ M(\xi)-M(\xi_0)\right\} := - \delta (\varepsilon_0) \label{doss}
\end{equation}
for $n$ large enough. Under Condition \ref{C1}, $\xi_0\in \Xi^{\res}$, and therefore,  by
% Proposition~\ref{consistency-new} and
 \eqref{deter},
\begin{equation}
\label{tress} \zeta_n(\xires) = M_n (\xires) - M_n (\xi_0) \ge
M_n (\xires)- \sup_{\xis \in \Xi^{\res}  } M_n (\xi ) \ge  -
A_n.
\end{equation}
Since $A_n \rightarrow 0$,  $-A_n > -\delta(\varepsilon_0)$ for $n $ large enough.
Combining this with (\ref{doss}) and (\ref{tress}) obtains
\[
\sup_{\Vert \xis-\xis_0\Vert =\varepsilon_0}  \zeta_n(\xi) <\zeta_n(\xires).
\]
We will deduce that $\Vert \xires -\xi_0\Vert <\varepsilon_0$, once we prove  that
\begin{equation}\label{result}
\sup_{\Vert \xis-\xis_0\Vert=\varepsilon_0} \zeta_n(\xi)=\sup_{\Vert \xis-\xis_0\Vert\geq\varepsilon_0} \zeta_n(\xi).
 \end{equation}
Now, let  prove \eqref{result}. Choose $\xi$ with $\Vert \xi -\xi_0\Vert >\varepsilon_0$, and take $t\in (0,1)$ such that  $\widetilde \xi = (1-t)\widehat\xi_n+t\xi$ is a distance $\varepsilon_0$ from $\xi_0$, where $\widehat\xi_n$ is the maximizer of $\zeta_n$ over $\Xi$, as defined in Proposition~\ref{consistency-new}, which is assumed to be at distance smaller than $\varepsilon_0$ from $\xi_0$. Then,
\[
\zeta_n(\widetilde\xi)=\zeta_n\left((1-t)\widehat \xi_n+t\xi\right)\geq
(1-t)\zeta_n(\widehat \xi_n)+t\zeta_n(\xi)\geq \zeta_n(\xi).
\]
Thus,
\[
\sup_{\Vert \xis-\xis_0\Vert =\varepsilon_0} \zeta_n(\xi)\geq \zeta_n(\widetilde \xi)
\geq \zeta_n(\xi)\;,\quad\mbox{for any $\xi$ with $\Vert \xi -\xi_0\Vert >\varepsilon_0$}
\]
which in turn yields~\eqref{result}.
%\[\sup_{\Vert \xis-\xis_0\Vert =\varepsilon_0} \zeta_n(\xi)\geq\sup_{\Vert \xis-\xis_0\Vert \geq\varepsilon_0} \zeta_n(\xi).\]
When $A_n=o_p(1)$,  convergence in probability of $\{\xires\}$ to $\xi_0$ is equivalent to the existence of an almost everywhere convergent  sub-sub-sequence for any subsequence $\{\widehat\xi^{\res}_{n_k}\}$. Therefore, by applying  the deterministic result to the set of probability one, where there exists a sub-subsequence of $A_{n_k}$ that converges to zero a.e., we obtain the result.
\end{proof}

\medskip

\noindent
\textbf{Regularity conditions for Proposition \ref{first} (from Theorem 5.23, p. 53 in~\cite{vander})}

\begin{condition}\label{(a)}
For each $\xi$ in an open subset $\Xi$ of a Euclidean space,  $m_{\xis} (z) $ is  a measurable function in $z$ such that $m_{\xis} (z)$
is differentiable in $\xi_0$ for almost every $z$ with derivative ${\dot m}_{\xis_0}(z)$.
\end{condition}
\begin{condition}\label{(b)}
There exists a measurable function $\phi(z)$ with $P \phi^2 < \infty$ such that,  for any $\xi_1 $ and $\xi_2 $ in a neighborhood of $\xi_0$,
\begin{equation}
\label{lips-teo}
  |m_{\xis_1} (z) - m_{\xis_2} (z) | \le  \phi(z) \| \xi_1 - \xi_2\|
\end{equation}
\end{condition}
\begin{condition}\label{(c)}
The map $\xi \to P m_{\xis} $ admits a second
order Taylor expansion at a point of maximum $\xi_0$ with nonsingular
symmetric second derivative matrix $V_{\xis_0}$.
\end{condition}

Under regularity conditions~\ref{(a)},~\ref{(b)} and~\ref{(c)},  van der Vaart proved in Theorem 5.23 of his book (\cite{vander})  that
if $\{\widehat{\xi}_n \}$ is a strong M-estimator sequence  for the criterion function $\mathbb P_nm_{\xis}$ over  $\Xi$ and
$\widehat{\xi}_n \rightarrow \xi_0$ in probability, then
\begin{align}\label{infl0}
\sqrt{n} (\widehat{\xi}_n - \xi_0) &=
- V_{\xis_0}^{-1} \frac{1}{\sqrt{n}} \sum_{i=1}^n {\dot
m}_{\xis_0}  (Z_i)+ o_p(1).
\end{align}
Moreover,
$\sqrt{n} (\widehat{\xi}_n -
\xi_0)$ is asymptotically normal with mean zero
and
\begin{equation}
 \label{general-avar}
\mbox{avar}\left\{\sqrt{n} (\widehat{\xi}_n -
\xi_0)\right\}=V_{\xis_0}^{-1}P{\dot m}_{\xis_0}{\dot
m}_{\xis_0}^{T} V_{\xis_0}^{-1}.
 \end{equation}
This result will be invoked in the following proofs.

\begin{proof}[Proof of Proposition ~\ref{first}]
Assume  that  $\{\xires\}$ is a sequence in $\Xi^{\res}$ that converges in probability to $\xi_0\in \mathcal M$, which is assumed to be open in $\Xi^{\res}$. Then, $\pr (\xires\in \mathcal M)\to 1$. Bicontinuity of $h$ guarantees that  $s^\ast_n=h^{-1}(\xires)$ converges in probability to $s_0=h^{-1}(\xi_0)$. Note that
\begin{equation}
 P_nm_{h(s^\ast_n)}=  P_nm_{\sxires} \geq \sup_{\xis^{\scriptstyle}\in \Xi^{\res} }P_n m_{\xis}
 \geq \sup_{\xis^{\scriptstyle }\in \mathcal M }P_n m_{\xis} \geq \sup_{s\in \mathcal S } P_n  m_{h(s)},
\end{equation}
except for an  $o_p(n^{-1})$ term that is omitted  in the last three inequalities. Therefore, $\{s^\ast_n\}$ is a strong maximizing sequence for the criterion function $P_nm_{h(s)}(z)$ over $\mathcal S$.

We next verify Conditions~\ref{(a)},~\ref{(b)} and~\ref{(c)} %of Theorem 5.23
are satisfied for $\{ s_n^{*} \}$, $s_0$,  $m_{h(s)}(z)$ and $Pm_{h(s)}$.
Specifically, Condition \ref{(a)} holds since $m_{h(s)}$ is a measurable function in $z$ for all $s \in
\mathcal{S}$ and $m_{h(s)}(z)$ is differentiable
in $s_0$ for almost every $z$. In fact, $h(s_0) = \xi_0$, $m_\xi(z)$
is differentiable at $\xi_0$ and $h(s)$ is also differentiable. Moreover, the derivative function is $\nabla h(s_0){\dot
m}_{\xis_0}$.
%In fact, $h(s) \in \mathcal{M}$ and this condition is valid in the unconstrained problem.

%\item
For all $s_1$ and $s_2$ in a neighborhood of $s_0$, by the continuity of $h$, $h(s_1)$ and $h(s_2)$ are in a neighborhood of $\xi_0$.
Then
\begin{eqnarray*}
| m_{h(s_1)}(z)-m_{h(s_2)}(z)|&\leq& \phi(z) \| h(s_1)-h(s_2)
\|
\\
&\leq & \phi(z)\|\nabla h\|_{\infty,\mathcal N_{ s_0}} \|s_1-s_2 \|
\end{eqnarray*}
where $\|\nabla h \|_{\infty,\mathcal N_{ s_0}}$ denotes the maximum of $\|\nabla h(s) \|$ in a neighborhood $\mathcal N_{ s_0}$ of $s_0$. The first inequality holds because  such condition is valid in
the unconstrained problem and the second inequality follows since $h$ is continuously differentiable at $s_0$. Thus, the Lipschitz Condition \ref{(b)} is satisfied.

For Condition~\ref{(c)}, we observe that the function $s \mapsto P m_{h(s)}$ is twice continuously differentiable in $s_0$ because both  $P m_{\xis}$ and $h(s)$ satisfy the required regularity properties at $\xi_0$ and $s_0$, respectively. Moreover,  since $P\dot m_{\xis_0} =0$, the second derivative matrix  of $Pm_{h(s)}$ at $s_0$, is $W_{s_0}= \nabla h(s_0)^T V_{\xis_0}
\nabla h(s_0)$, where $V_{\xis_0}$ is the second derivative matrix of $P m_{\xis}$ at $\xi_0$. The matrix $W_{s_0}$ is nonsingular and symmetric because $\nabla h (s_0)$ is full rank and $V_{\xis_0}$ is nonsingular and symmetric. % 

We can now apply Theorem 5.23 in van der Vaart (2000), and obtain
\begin{equation}%\label{aprox lineal de sn}
\sqrt{n}(s_n^{*} - s_0)= -\left(
\nabla h(s_0)^T V_{\xis_0}\nabla h (s_0)\right)^{-1}
\frac{1}{\sqrt{n}} \sum_{i=1}^n \nabla h(s_0)^T{\dot m}_{\xis_0}(Z_i)+ o_p(1),
\end{equation}
so that the first order Taylor series expansion of $h(s_n^{*})$  around $s_0$ is
\begin{eqnarray*}
\sqrt{n}(h(s_n^{*})- h(s_0))&=& \nabla h(s_0)\sqrt{n}(s_n^{*} - s_0) )+o_p(1) \\
                               &=& -\frac{1}{\sqrt n} \sum_{i=1}^ n
                                                          \nabla h(s_0)
                               \left(\nabla h(s_0)^T V_{\xis_0}\nabla h(s_0)\right)^{-1}
\nabla h(s_0)^T{\dot m}_{\xis_0}(Z_i)+o_p(1)\\
    &=& \frac{1}{\sqrt n} \sum_{i=1}^ n \Pi_{\xis_0(-V_{\xi_0}} IF_{\xis_0} (Z_i)+o_p(1),\\
\end{eqnarray*}
for $\Pi_{\xis_0(-V_{\xi_0}}$  and  $IF_{\xis_0} (z)$ defined in  (\ref{Pi General}) and (\ref{infl1}), respectively, which obtains the expansion in (\ref{infl2}).
\end{proof}

\medskip

\begin{proof}[Proof of Theorem~\ref{teorema GLM}]
%The calculations required to arrive at the asymptotic variance expression in (\ref{W}),
%necessitates the use of the following notation
Write
\begin{eqnarray*}
\eta_{x 1}= \overline{\eta}_1+ \beta x
=\left(\overline{\eta}_1, \beta \right)f(x) = (f(x)^T
\otimes I_{k_1})\vecc\left(\overline{\eta}_1, \beta\right),
\end{eqnarray*}
where {{$f(x)^T= (1, x^T)\in \real^{ 1\times (p+1)}$}}. Then, in matrix form,
\begin{equation} \label{ModelMatricialGLM}
\eta_{x}=\left( \begin{array}{c}
             \overline{\eta}_1+ \beta x \\
     \overline{\eta}_2
             \end{array}\right)=\left(
                                 \begin{array}{cc}
                                   f(x) ^T \otimes I_{k_1} & 0 \\
                                   0 & I_{k_2} \\
                                 \end{array}
                               \right) \left(
                                         \begin{array}{c}
                                           \vecc\left(\overline{\eta}_1, \beta\right) \\
                                            \overline{\eta}_2\\
                                         \end{array}
                                       \right)=
                                       F(x)\xi,
\end{equation}
where
\[F(x)=\left(\begin{array}{cc}
                     f(x) ^T \otimes I_{k_1} & 0 \\
                      0 & I_{k_2} \\
                     \end{array} \right) \in \real^{k \times (k_1 (p+1)+k_2) },
\]
and $\xi=\left(\overline{\eta}_1^T,\vecc^T(\beta),\overline{\eta}_2^T \right)^{T}$ is the vector of parameters of model (\ref{modelo1}). Note that $F(x)\xi \in H$  for any value of $\xi$ {{with $H$ defined in (\ref{natural-space})}}. This  notation allows to simplify the expression for the log-likelihood function in (\ref{m de glm}), and replace it with
\begin{equation}
 \label{m-new-notation}
m_{\xis}(z)=T(y)^{T}F(x)\xi - \psi(F(x)\xi).
\end{equation}
The regularity conditions required to derive consistency and asymptotic distribution of the MLE are:
For any $\xi$ and any $\overline \eta$ and for any compact $\mathcal K\subset \Xi=\real^{k_1}\times \real^{k_1p}\times H_2$.
\begin{equation}\label{concentrado}
 \pr \left( F(X)\xi =\overline \eta\right)<1.
\end{equation}
\begin{equation}\label{forprop1}
%  E\left[\Vert F(\X)\Vert    \right]\; <\; \infty\;,\quad
  \E\left[\Vert  T(Y)^ T F(X)\Vert  \right]\; <\; \infty\;,\quad
 \E\left[\sup_{\xi \in \mathcal K}\vert \psi(F(X)\xi)\vert \right] \; <\; \infty,
\end{equation}
\begin{eqnarray}
&& \label{nabla-psi-bounded} \E\left[\Vert  T(Y)^ T F(X)\Vert^2 \right]\; <\; \infty\;,\quad
 \E\left[\sup_{\xi \in \mathcal K}\Vert\nabla \psi(F(X) \xi) F(X)\Vert^2 \right] \;<\;\infty\;,\quad \\
 &&\label{hessiano} \E\left[\sup_{\xi \in \mathcal K}\Vert F(X)^T\nabla^2
 \psi(F(X)\xi) F(X)\Vert \right] \;<\;\infty .
\end{eqnarray}
%for any compact $\mathcal K\subset \Xi=\real^{k_1}\times \real^{k_1p}\times  H_2$.

To prove the existence, uniqueness and consistency of the MLE under the present model,  $\widehat\xi_n$,
we  next show that the assumptions stated in Proposition \ref{consistency-new} are satisfied.

The strict convexity of $\psi$ implies that,  for each fixed $z$, $m_{\xis}(z)$ is a strictly concave function  in $\xi \in \Xi=\real^{k_1+{{p}} k_1}\times H_2 $.
The concavity of $m_\xis(z)$ is preserved under expectation, thus $M(\xi)=Pm_\xis$ is  concave. The identifiability condition satisfied by  the exponential family in Section \ref{exponential-family} allows applying Lemma~5.35 of van der Vaart (2000, p. 62) to conclude that
\begin{equation}
  \E\left[T(Y)^{T}F(Y)\xi - \psi(F(X)\xi)\mid X\right]
  \;\leq\;
   \E\left[T(Y)^{T}F(X)\xi_0 - \psi(F(X)\xi_0)\mid X \right]
\end{equation}
Taking expectation with respect to $X$, we conclude that $Pm_\xis\leq Pm_{\xis_0}$ for any $\xi$.
 Moreover, if $Pm_{\xis_1}= Pm_{\xis_0}$, $\pr (F(X)\xi_1=F(X)\xi_0)=1$, which contradicts the hypothesis (\ref{concentrado}). Finally,  the integrability of (\ref{envelope}) follows from (\ref{forprop1}).

The conditions~\ref{(a)},~\ref{(b)} and~\ref{(c)} required by van der Vaart's Theorem 5.23 to derive the asymptotic distribution of M-estimators are easily verifiable under the integrability assumptions stated in (\ref{nabla-psi-bounded}) and (\ref{hessiano}).

The second derivative matrix of $Pm_{\xis}$ at $\xi_0$ is
\begin{equation}
 V_{\xis_0}=-\E\left[F(X)^ T\nabla^ 2\psi(F(X)\xi_0)F(X)\right].
\end{equation}
Finally, observe  that
\begin{eqnarray*}
  P\left[{\dot m}_{\xis_0}{\dot m}_{\xis_0}^{T}\right]&=&
  \E\left[ F(X)^T \left\{T(Y)-\nabla^T \psi(F(X)\xi_0)\right\}
  \left\{T(Y)^T-\nabla\psi(F(X)\xi_0) \right\} F(X)\right]\\
  &=& \E\left\{ F(X)^T \nabla^ 2 \psi(F(X)\xi_0)F(X)\right\}
  \end{eqnarray*}to deduce  that, according to the general formula for  the asymptotic variance of an M-estimator in (\ref{general-avar}), the asymptotic variance of $\sqrt{n}(\widehat\xi_n-\xi_0)$ is given by (\ref{W}).
\end{proof}

Lemmas~\ref{de beta0}--~\ref{M abierto} and Corollary~\ref{preimagen} are required to prove Proposition \ref{condiciones de RR}.

\begin{lemma}\label{de beta0}
Assume that $\beta_{01} \in  \real^{k_1\times r}_{\firsts,d}$ and can be written as in (\ref{beta 0}). Let $(S_0, T_0) \in \real^{k_1\times d}_d\times \real^{d\times q}_d$ with $S_0 T_0=\beta_{01}$. Then, there exists $U\in \real^{d\times d}_d$ so that $S_0=S_0(U)$ and $T_0=T_0(U)$, with
\begin{eqnarray}\label{deU}
S_0(U):= \left( \begin{array}{c} U \\ A_0 U
\end{array} \right) \quad\mbox{and}\quad T_0(U):= U^{-1}B_0.
\end{eqnarray}
\end{lemma}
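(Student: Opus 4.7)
The plan is to use the uniqueness-up-to-invertible-transformation of full-rank factorisations of a rank-$d$ matrix. Both $\beta_{01} = \bigl(\begin{smallmatrix} I_d \\ A_0 \end{smallmatrix}\bigr) B_0$ and $\beta_{01} = S_0 T_0$ are full-rank factorisations: the left factor is $k_1\times d$ of rank $d$ and the right factor is $d\times r$ of rank $d$. The desired conclusion is that the left factor is determined up to right multiplication by an invertible $d\times d$ matrix $U$, and the right factor is then determined by $U^{-1}$.

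Concretely, I would argue as follows. First, the column space of a matrix of the form $MN$, where $N$ has full row rank, equals the column space of $M$. Applying this to both factorisations gives
\[
\spn\!\left\{\tbinom{I_d}{A_0}\right\} \;=\; \spn(\beta_{01}) \;=\; \spn(S_0).
\]
Since the columns of $\bigl(\begin{smallmatrix} I_d \\ A_0 \end{smallmatrix}\bigr)$ form a basis of this $d$-dimensional subspace, each column of $S_0$ expands uniquely as a linear combination of them, producing a matrix $U\in\real^{d\times d}$ with $S_0=\bigl(\begin{smallmatrix} I_d \\ A_0 \end{smallmatrix}\bigr)U=\bigl(\begin{smallmatrix} U \\ A_0 U \end{smallmatrix}\bigr)$. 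Because $S_0$ has rank $d$, this $U$ is necessarily invertible (otherwise $S_0$ would have rank $< d$).

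For the second identity, substitute back into $S_0 T_0 = \beta_{01}$:
\[
\tbinom{I_d}{A_0} U T_0 \;=\; \tbinom{I_d}{A_0} B_0.
\]
Since $\bigl(\begin{smallmatrix} I_d \\ A_0 \end{smallmatrix}\bigr)$ has linearly independent columns, it is left-cancellable, yielding $U T_0 = B_0$, i.e.\ $T_0 = U^{-1} B_0$, as claimed.

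There is no real obstacle: the argument is elementary linear algebra once one recognises that the lemma is simply the uniqueness statement for rank-$d$ factorisations. The only mild care required is to justify that the matrix $U$ extracted from the column-span identity is invertible, which follows immediately from the rank-$d$ hypothesis on $S_0$.
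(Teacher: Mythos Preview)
Your proof is correct. It differs from the paper's in emphasis rather than substance. The paper partitions $S_0=\bigl(\begin{smallmatrix} S_{01}\\ S_{02}\end{smallmatrix}\bigr)$, argues directly from the ``first $d$ rows independent'' hypothesis that $S_{01}$ is invertible, sets $U=S_{01}$, reads off $T_0=U^{-1}B_0$ from the top block equation, and then recovers $S_{02}=A_0U$ by right-multiplying $S_{02}T_0=A_0B_0$ by $T_0^T(T_0T_0^T)^{-1}$. Your route instead equates column spaces to obtain $S_0=\bigl(\begin{smallmatrix} I_d\\ A_0\end{smallmatrix}\bigr)U$ in one stroke, then left-cancels $\bigl(\begin{smallmatrix} I_d\\ A_0\end{smallmatrix}\bigr)$ to get $T_0$. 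Both yield the same $U$ (the top $d\times d$ block of $S_0$), but your argument is slightly cleaner: it avoids the auxiliary inversion of $T_0T_0^T$ and makes transparent that the lemma is just the standard uniqueness-up-to-$\mathrm{GL}(d)$ of full-rank factorisations.
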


\begin{proof}
Let
\[S_0 =  \left( \begin{array}{c} S_{01} \\ S_{02}
\end{array} \right)\]
with $S_{01} \in {\mathbb R}^{d \times d}$ and $S_{02}\in {\mathbb R}^{(k_1-d)\times d}$. The matrix $S_{01}T_0$ is comprised of the first $d$ rows of $\beta_{01}$ which  are linearly independent. Then, $d=\rank(S_{01} T_0)\leq \rank(S_{01})\leq d$, hence $S_{01}$ is invertible.
Take $U=S_{01}$. From the expression (\ref{beta 0}) for $\beta_{01}$, we have
\begin{eqnarray*}
S_{01} T_0&=& B_0 \\
S_{02} T_0&=&A_0 B_0.
\end{eqnarray*}
Thus, $T_0=U^{-1}B_0$, and since $T_0 T_0^T \in \mathbb{R}^{d\times d}$ and rank $d$,
\begin{eqnarray*}
S_{02} &=&A_0 B_0 T_0^T(T_0 T_0^T)^{-1}=A_0 B_0 B_0^{T} U^{-1}( U^{-1} B_0 B_0^{T} U^{-1})^{-1}=A_0 U.
\end{eqnarray*}
\end{proof}

\medskip

\begin{corollary}\label{preimagen}
Let $\beta_{01} \in  \real^{k_1\times r}_{\firsts,d}$ and can be written as in (\ref{beta 0}) . For $U$ in $\real^{d\times d}_d$ and $S_0(U), T_0(U)$ as defined in (\ref{deU}), the pre-image of $\xi_0$ through $g$, $g^{-1}(\xi_0)$, satisfies
\begin{equation}
 g^{-1}(\xi_0)\cong \left\{ \left( \overline\eta_{01}, S_0(U), T_0(U),
 \beta_{02},\overline\eta_{02} \right): U\in \real^{d\times d}_d\right\}.
\end{equation}
\end{corollary}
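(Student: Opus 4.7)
The plan is to prove the corollary as a direct two-way set equality, leveraging Lemma \ref{de beta0} for one inclusion and a short matrix calculation for the other.

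First, I would unpack the definition of $g^{-1}(\xi_0)$. Because $g$ acts as the identity on the coordinates $\overline\eta_1,\beta_2,\overline\eta_2$ and sends $(S,T)\mapsto ST$, a point $(\overline\eta_1,(S,T),\beta_2,\overline\eta_2)\in\Theta$ lies in $g^{-1}(\xi_0)$ if and only if $\overline\eta_1=\overline\eta_{01}$, $\beta_2=\beta_{02}$, $\overline\eta_2=\overline\eta_{02}$, and $(S,T)\in\real_d^{k_1\times d}\times\real_d^{d\times r}$ satisfies $ST=\beta_{01}$. Hence the corollary reduces to the identity
\begin{equation*}
\{(S,T)\in\real_d^{k_1\times d}\times\real_d^{d\times r}:ST=\beta_{01}\}=\{(S_0(U),T_0(U)):U\in\real_d^{d\times d}\}.
\end{equation*}

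For the inclusion $\supseteq$, I would fix $U\in\real_d^{d\times d}$ and compute directly, using the block form \eqref{beta 0},
\begin{equation*}
S_0(U)T_0(U)=\begin{pmatrix} U\\ A_0 U\end{pmatrix}U^{-1}B_0=\begin{pmatrix}B_0\\ A_0B_0\end{pmatrix}=\beta_{01}.
\end{equation*}
It remains to check that $S_0(U)$ and $T_0(U)$ have full rank $d$: since $U$ is invertible, the top $d\times d$ block of $S_0(U)$ is invertible, so $\rank S_0(U)=d$; and $T_0(U)=U^{-1}B_0$ has rank $d$ because $B_0\in\real_d^{d\times r}$ and left multiplication by an invertible matrix preserves rank.

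For the reverse inclusion $\subseteq$, the work is already done by Lemma \ref{de beta0}: given any factorisation $ST=\beta_{01}$ with $S\in\real_d^{k_1\times d}$ and $T\in\real_d^{d\times r}$, the lemma produces $U\in\real_d^{d\times d}$ (concretely $U=S_{01}$, the top $d\times d$ block of $S$) with $S=S_0(U)$ and $T=T_0(U)$. Combining both inclusions yields the claimed description of $g^{-1}(\xi_0)$. The argument is essentially bookkeeping on top of Lemma \ref{de beta0}; the only non-trivial check is the rank verification in the $\supseteq$ direction, and that is immediate from the block structure.
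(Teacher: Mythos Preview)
Your proof is correct and is exactly the argument the paper has in mind: the corollary is stated without proof as an immediate consequence of Lemma~\ref{de beta0}, and your two-inclusion write-up simply makes explicit the bookkeeping the paper leaves to the reader. The reduction to the $(S,T)$-coordinate, the direct computation for $\supseteq$, and the appeal to Lemma~\ref{de beta0} for $\subseteq$ are all the natural steps.
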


\medskip

\begin{lemma}\label{abierto}
$\real^{d\times m}_d$ {{with $d\le m $}} is an open set in $\real^{d\times m}$.
 \end{lemma}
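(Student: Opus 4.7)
The plan is to exploit the fact that, since $d\le m$, the value $d$ is the maximum possible rank of a $d\times m$ matrix. Consequently, $\real^{d\times m}_d$ coincides with the set of $d\times m$ matrices whose rank is \emph{not} strictly less than $d$, which is precisely the complement of the locus where \emph{every} $d\times d$ minor vanishes. Expressing openness via this complementary description reduces the lemma to a continuity argument for determinants.

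More concretely, fix $M_0\in \real^{d\times m}_d$. Because $M_0$ has $d$ linearly independent columns, there is some index set $I\subset\{1,\ldots,m\}$ of size $d$ such that the $d\times d$ submatrix $M_0^{I}$ formed by the columns of $M_0$ indexed by $I$ satisfies $\det M_0^{I}\neq 0$. The map $M\mapsto \det M^{I}$ is a polynomial, hence continuous, on $\real^{d\times m}$, so there is an open neighbourhood $\mathcal{U}$ of $M_0$ on which $\det M^{I}\neq 0$. For every $M\in\mathcal{U}$ this forces $\rank(M)\ge d$, and since trivially $\rank(M)\le d$, equality holds. Therefore $\mathcal{U}\subset \real^{d\times m}_d$, showing that $M_0$ is an interior point.

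Since $M_0\in\real^{d\times m}_d$ was arbitrary, the set is open. Equivalently, one may argue globally by noting that the complement
\[
\real^{d\times m}\setminus \real^{d\times m}_d \;=\;\bigcap_{\substack{I\subset\{1,\ldots,m\}\\ |I|=d}}\bigl\{M\in\real^{d\times m}:\det M^{I}=0\bigr\}
\]
is a finite intersection of zero sets of continuous (polynomial) functions, hence closed.

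There is no real obstacle here: the only point worth emphasising is that the argument genuinely uses $d\le m$ (so that $d\times d$ submatrices exist and $d$ is the maximal attainable rank); for a fixed rank strictly below the maximum the corresponding set would fail to be open.
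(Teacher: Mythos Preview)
Your argument is correct and follows essentially the same idea as the paper's proof: both show openness via the continuity of a determinant function that detects full rank. The only cosmetic difference is that the paper works with the single Gram determinant $M\mapsto\det(MM^{T})$ (which vanishes exactly when $\rank(M)<d$) and phrases the argument as ``complement is sequentially closed,'' whereas you use the collection of $d\times d$ minors and argue pointwise; neither gains anything substantive over the other.
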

\begin{proof}
We will show that the complement of  $\real^{d\times m}_d$ is closed. Consider $(T_n)_{n\geq 1} \in \real^{d\times m}$, each
$T_n$ of rank strictly less than $d$, and assume that $T_n$ converges to $T \in \real^{d\times m}$. Note that, $\vert T_n T_n^T\vert=0$ for all $n\geq 1$, so that $\vert TT^T\vert$ is also equal to zero. Hence, $\rank(T)<d$.
\end{proof}

\medskip

\begin{lemma}\label{abierto Theta y S}
 $\Theta$ and $\mathcal S$ are open sets.
\end{lemma}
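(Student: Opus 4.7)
The plan is to use the fact that a finite Cartesian product of open sets is open in the product Euclidean space, and then verify openness factor-by-factor. Looking at
\[
\Theta \cong \real^{k_1}\times \real_d^{k_1\times d}\times \real_d^{d\times r}\times\real^{k_1\times(p- r)}\times H_2
\]
and
\[
\mathcal S\cong \real^{k_1}\times \real^{(k_1-d) \times d} \times \real^{d\times r}_d\times\real^{k_1\times(p- r)}\times H_2,
\]
the factors $\real^{k_1}$, $\real^{k_1\times(p-r)}$, and $\real^{(k_1-d)\times d}$ are full Euclidean spaces, hence trivially open; $H_2$ is open in $\real^{k_2}$ by the standing assumption on the natural parameter space made in Section \ref{exponential-family}. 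So the task reduces to the rank-restricted factors.

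For the factor $\real^{d\times r}_d$ (with $d\le r$), openness in $\real^{d\times r}$ is exactly the content of Lemma \ref{abierto}, so nothing more is needed. For the factor $\real_d^{k_1\times d}$ (with $d\le k_1$) appearing in $\Theta$, Lemma \ref{abierto} is not immediately in the right shape since the statement there fixes the rank equal to the first dimension. I would handle this by transposition: the map $M\mapsto M^T$ is a linear homeomorphism from $\real^{k_1\times d}$ onto $\real^{d\times k_1}$, and $\rank(M)=\rank(M^T)$. Hence
\[
\real_d^{k_1\times d}=\{M\in\real^{k_1\times d}:M^T\in\real_d^{d\times k_1}\},
\]
which is the preimage of the open set $\real_d^{d\times k_1}$ (open by Lemma \ref{abierto}, since $d\le k_1$) under a continuous map, hence open.

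Combining these observations, each factor of $\Theta$ and each factor of $\mathcal S$ is open in its ambient Euclidean space, so both $\Theta$ and $\mathcal S$ are open. No real obstacle arises here; the only subtlety is the orientation mismatch in Lemma \ref{abierto}, which is resolved by transposition.
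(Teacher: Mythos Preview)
Your proof is correct and follows essentially the same approach as the paper: both argue that $\Theta$ and $\mathcal S$ are products of open sets, invoking Lemma \ref{abierto} for the rank-restricted factors. You are in fact more careful than the paper, which simply cites Lemma \ref{abierto} for both $\real^{k_1\times d}_d$ and $\real^{d\times r}_d$ without noting that the former has the rank equal to the \emph{column} dimension; your transposition argument fills that small gap cleanly.
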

\begin{proof}
 Up to an homeomorphism, $\Theta$  and $\mathcal S$ are equivalent to
 \[ \real^{k_1} \times \real^{k_1\times d}_d \times \real^{d\times { {r}}}_d\times \real^{k_1\times (p-{ {r}})}
 \times
 H_2\quad \mbox{and}\quad \real^{k_1}\times \real^{(k_1-d)\times d} \times \real^{d\times {{r}}}_d\times \real^{k_1\times (p-{{r}})} \times H_2,\]
 respectively, which are products of open sets by Lemma \ref{abierto}.
\end{proof}

\medskip

\begin{lemma}\label{p-bicont}
 $h: \mathcal S\to \mathcal M$ is one to one  bicontinuous.
\end{lemma}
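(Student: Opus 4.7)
The plan is to verify the three required properties—that $h$ lands inside $\mathcal M$, that it is injective, and that both $h$ and $h^{-1}$ are continuous—by exploiting the rank condition $B\in\real^{d\times r}_d$ to invert the block structure explicitly.

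First I would check that $h(\mathcal S)\subset \mathcal M$. Given $(\overline\eta_1,(A,B),\beta_2,\overline\eta_2)\in\mathcal S$, the matrix
\[
\beta_1\;=\;\begin{pmatrix} B\\ AB\end{pmatrix}
\]
has its first $d$ rows equal to $B$, which has rank $d$ by hypothesis, so the first $d$ rows are linearly independent and $\rank(\beta_1)\ge d$. On the other hand every row of $\beta_1$ lies in the row space of $B$, so $\rank(\beta_1)\le d$. Thus $\beta_1\in\real^{k_1\times r}_{\firsts,d}$ and $h$ is well defined into $\mathcal M$.

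Next, for injectivity, suppose $h(\overline\eta_1,(A,B),\beta_2,\overline\eta_2)=h(\overline\eta_1',(A',B'),\beta_2',\overline\eta_2')$. Equality of the first, third and fourth coordinates is immediate. Equality of the second coordinates forces $B=B'$ (top $d$ rows) and $AB=A'B$ (bottom $k_1-d$ rows). Since $B\in\real^{d\times r}_d$, the Gram matrix $BB^T$ is invertible, so right-multiplying $AB=A'B$ by $B^T(BB^T)^{-1}$ yields $A=A'$.

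Continuity of $h$ is obvious from the formula, since stacking and multiplying matrices are continuous operations. For the continuity of $h^{-1}$, I would write it explicitly: given $(\overline\eta_1,\beta_1,\beta_2,\overline\eta_2)\in\mathcal M$, split $\beta_1=\left(\begin{smallmatrix} B\\ C\end{smallmatrix}\right)$ with $B$ its first $d$ rows and $C$ its last $k_1-d$ rows, and set
\[
h^{-1}(\overline\eta_1,\beta_1,\beta_2,\overline\eta_2)\;=\;\bigl(\overline\eta_1,\,(CB^T(BB^T)^{-1},\,B),\,\beta_2,\,\overline\eta_2\bigr).
\]
The map $\beta_1\mapsto(B,C)$ is linear hence continuous; on the open set where $BB^T$ is invertible (which, by Lemma \ref{abierto}, contains the image of the top-row projection of $\mathcal M$) matrix inversion is continuous, and matrix multiplication is continuous, so the composite is continuous. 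The only subtle point—and the main obstacle—is verifying that this formula genuinely inverts $h$ and is defined throughout $\mathcal M$; both follow directly from the definition of $\real^{k_1\times r}_{\firsts,d}$, which forces $B$ to have rank $d$. This completes the verification.
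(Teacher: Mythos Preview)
Your proposal is correct and follows essentially the same approach as the paper: both arguments hinge on the explicit inverse formula $\beta_1=\left(\begin{smallmatrix}B\\ C\end{smallmatrix}\right)\mapsto (CB^T(BB^T)^{-1},B)$, whose well-definedness and continuity rest on the invertibility of $BB^T$ guaranteed by $B\in\real^{d\times r}_d$. Your write-up is somewhat more thorough—you separately verify that $h$ lands in $\mathcal M$ and argue injectivity directly before exhibiting the inverse—whereas the paper simply presents the inverse and checks the two composition identities, but the substance is the same.
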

\begin{proof}
It suffices to note that $h_1: \real^{(k_1-d)\times d}\times \real^{d\times q}_d \mapsto \real^{k_1\times q}_{\firsts,d}$ with
 \begin{eqnarray*}
 (A, B) \to \left( \begin{array}{c} B \\ A B
\end{array} \right)
\end{eqnarray*}
satisfies the required properties for $h$. Let  $h_1^{-1}:\real^{k_1\times
q}_{\firsts,d} \mapsto \real^{(k_1-d)\times d}\times \real^{d\times q}_d$  with
\begin{equation}
  \left(
\begin{array}{c} B \\ C \end{array} \right)      \to (C B^T\left(BB^{T}\right)^{-1} ,B)
\end{equation}
Then, $h_1^{-1} ( h_1 (A, B) ) = (A, B)$ and \[h_1 \left(h_1^{-1}   \left(
\begin{array}{c} B \\ A B \end{array} \right)   \right)  =   \left( \begin{array}{c} B \\  AB  \end{array} \right)   ,\] and therefore $h_1^{-1}$ is the inverse of  $h_1$ .
Thus, $h_1$ is one to one and bicontinuous.
\end{proof}

\medskip

\begin{lemma}\label{M abierto}
 $\mathcal M$ is open in $\Xi^{\res}$
 \end{lemma}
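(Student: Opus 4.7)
The goal is to show that $\mathcal M \cong \real^{k_1}\times \real_{\firsts,d}^{k_1\times r}\times\real^{k_1\times(p-r)}\times H_2$ is open in $\Xi^{\res}\cong \real^{k_1}\times \real_d^{k_1\times r}\times\real^{k_1\times(p-r)}\times H_2$. Since $\mathcal M$ and $\Xi^{\res}$ are products whose first, third, and fourth factors coincide, it suffices to prove that $\real^{k_1\times r}_{\firsts,d}$ is open in $\real^{k_1\times r}_d$ (with the subspace topology inherited from $\real^{k_1\times r}$); openness of the product then follows automatically.

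My plan is to characterise $\real^{k_1\times r}_{\firsts,d}$ as the intersection of $\real^{k_1\times r}_d$ with the preimage of an open set under a continuous projection. Concretely, let $\pi : \real^{k_1\times r} \to \real^{d\times r}$ be the linear (hence continuous) map that extracts the first $d$ rows of a matrix. The condition that the first $d$ rows of $M\in\real^{k_1\times r}_d$ be linearly independent is precisely the condition $\pi(M)\in \real^{d\times r}_d$. Therefore
\[
\real^{k_1\times r}_{\firsts,d} \;=\; \real^{k_1\times r}_d \;\cap\; \pi^{-1}\!\left(\real^{d\times r}_d\right).
\]
By Lemma~\ref{abierto}, $\real^{d\times r}_d$ is open in $\real^{d\times r}$, so $\pi^{-1}(\real^{d\times r}_d)$ is open in $\real^{k_1\times r}$. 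Intersecting with $\real^{k_1\times r}_d$ gives an open subset of $\real^{k_1\times r}_d$ in the subspace topology, which is what we need.

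Putting the factors back together, $\mathcal M$ is the product of the open sets $\real^{k_1}$, $\real^{k_1\times r}_{\firsts,d}$ (open in $\real^{k_1\times r}_d$), $\real^{k_1\times (p-r)}$, and $H_2$ (open in $\real^{k_2}$ by hypothesis on the natural parameter space), and is therefore open in $\Xi^{\res}$. No serious obstacle arises here; the only point that requires a moment of care is the identification of ``first $d$ rows linearly independent'' with $\pi(M)\in\real^{d\times r}_d$, which reduces the problem directly to Lemma~\ref{abierto}.
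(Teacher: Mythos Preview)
Your proof is correct and follows essentially the same reduction as the paper: both arguments boil down to showing that $\real^{k_1\times r}_{\firsts,d}$ is open in $\real^{k_1\times r}_d$. The only difference is packaging---the paper shows the complement is closed by a direct sequential determinant argument (mirroring the proof of Lemma~\ref{abierto}), whereas you invoke Lemma~\ref{abierto} itself via the continuous projection $\pi$ onto the first $d$ rows; this is a minor stylistic variation, not a different strategy.
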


\begin{proof}
It suffices to show $\mathbb{R}^{k_1 \times r}_{\firsts,d}$ is an open set in $\mathbb{R}^{k_1 \times r}_{d}$, or equivalently, that the complement of  $\mathbb{R}^{k_1 \times r}_{\firsts,d}$ is a closed set in $\mathbb{R}^{k_1 \times r}_{d}$.
Let $\{T_n\} \subset \mathbb{R}^{k_1 \times r}_{d}$ be a sequence such that the first $d$ rows are not linearly independent and $T_n$ converges to $T \in \mathbb{R}^{k_1 \times r}_{d}$. Then, if we write \[T_n = \left(
                \begin{array}{c}
                  T_{n_1} \\
                  T_{n_2} \\
                \end{array}
              \right) \]
with $T_{n_1} \in \mathbb{R}^{d \times r}$ and $T_{n_2} \in \mathbb{R}^{(k_1-d) \times r}$, we obtain that $\mid T_{n_1}T_{n_1}^{T}\mid=0$ for all $n$. Then $\mid T_{1}T_{1}^{T}\mid=0$, where $T_{1}$ comprises of the first $d$ rows of $T$, and therefore the first $d$ rows of $T$ are not linearly independent.
\end{proof}

\medskip

\begin{proof}[Proof of Proposition \ref{condiciones de RR}]
We verify that Conditions \ref{C1}, \ref{C2} and
\ref{C3} are satisfied.

Condition \ref{C1}: By Lemma \ref{abierto Theta y S}, the set $\Theta$, defined in (\ref{Theta-example}), is open and $\xi_0$ belongs to $g(\Theta)=\Xi^{\res}$.

Condition \ref{C2}:
Since the first $d$ rows of $\beta_{01}$ are linearly independent, $\xi_0 \in \mathcal{M}$.  Then, applying Lemma \ref{M abierto}  obtains that $\mathcal M$  is an open set in $\Xi^{\res}$.

Consider  $(\mathcal{S},h)$ as in (\ref{S-example}).  By Lemma \ref{abierto Theta y S}, $\mathcal{S}$ is an open set, and  $h$ is one-to-one and bicontinuous by Lemma \ref{p-bicont}. Furthermore, if
\[s_0 =(\overline{\eta}_{01}, A_0, B_0, \beta_{02}, \overline{\eta}_{02})\in \mathcal{S},\]
where $A_0$ y $B_0$ are given in (\ref{beta 0}), then $h(s_0)=\left(\overline{\eta}_{01},\ \beta_{01}, \beta_{02},\overline{\eta}_{02}\right)=\xi_0$.

The function $h$ is twice continuously differentiable and its Jacobian  is full rank. In fact, the latter is
\begin{equation}
\nabla h(\overline{\eta}_1, A,B,\beta_2,\overline{\eta}_2)=
\left(
    \begin{array}{ccccc}
    I_{k_1}&0 & 0 & 0 & 0\\
    0& B^{T} \otimes \left(
                       \begin{array}{c}
                         0 \\
                         I_{k_1-d} \\
                       \end{array}
                     \right)
      &  I_{{ {r}}} \otimes \left(
                        \begin{array}{c}
                          I_d \\
                          A \\
                        \end{array}
                      \right) & 0 &0
      \\
      0& 0 &0& I_{k_1(p-{ {r}})}&0\\
      0&0&0&0&I_{k_2}
    \end{array}
  \right)
\end{equation}
of order $(k_1p+k) \times (d(k_1+{ {r}}-d)+k_1(p-{ {r}})+k)$ with full column rank $d(k_1+{ {r}}-d) + k_1(p-{ {r}}) + k$ [see \cite{CookNi2005}; also it is
a direct implication of Theorem 5 in \cite{MatrixTricks}].

Condition \ref{C3}: Consider   $\theta_0\in g^{-1}(\xi_0)$, associated with $U$, as shown in Lemma \ref{de beta0}.
%We will show that  $\spn \nabla g(\theta_0)=\spn \nabla h(s_0)$.
Since
\[ \nabla g(\overline{\eta}_1, S, T, \beta_2,\overline{\eta}_2) =\left( \begin{array}{ccccc}
I_{k_1} &0 &0 &0 &0\\
0&T^T \otimes I_{k_1}  & I_{ {r}} \otimes S &  0 &0 \\
      0&0& 0 & I_{k_1(p-{ {r}})}&0\\
      0& 0 & 0 &0& I_{k_2}
      \end{array}\right) \]
then,
\begin{eqnarray*}
&&\nabla g \left(\overline{\eta}_{10},  \left(\begin{array}{c}
                    U \\
                    A_0 U \\
                    \end{array}
                   \right)
,  U^{-1}B_0, \beta_{20},\overline{\eta}_{20} \right)\\
&& = \left( \begin{array}{ccccc}
                  I_{k_1} &0 &0 &0 &0\\
                  0& B_0^T \otimes I_{k_1}  & I_{ {r}} \otimes  \left(\begin{array}{c}
                    I_d \\
                    A_0 \\
                    \end{array}
                   \right)&  0 &0 \\
0&      0& 0 & I_{k_1(p-{ {r}})}&0\\
      0& 0 & 0 & 0&I_{k_2}
                   \end{array}\right)G.
       \end{eqnarray*}
where
\begin{equation}
G= \left(
     \begin{array}{ccccc}
        I_{k_1} &0 &0 &0&0\\
         0&U^{-T}\otimes I_{k_1}  & 0 & 0 &0\\
         0& 0 &  (I_{ {r}}\otimes U ) &0&0\\
         0 & 0&0&
         I_{k_1(p-{ {r}})}&0\\
         0&0&0&0&I_{k_2}
        \end{array}\right)
\end{equation}
Since $G$  is invertible of order $(d(k_1+{ {r}})+k_1(p-{ {r}})+k) \times
(d(k_1+{ {r}})+k_1(p-{ {r}})+k)$,
\begin{eqnarray*}
\spn \nabla g(\theta_0)&=&\spn \left( \begin{array}{ccccc}
                  I_{k_1} &0 &0 &0 &0\\
                  0& B_0^T \otimes I_{k_1}  & I_{ {r}} \otimes  \left(\begin{array}{c}
                    I_d \\
                    A_0 \\
                    \end{array}
                   \right)&  0 &0 \\
0&      0& 0 & I_{k_1(p-{ {r}})}&0\\
      0& 0 & 0 & 0&I_{k_2}
                   \end{array}\right).
\end{eqnarray*}
Next, since  {{$\nabla h (s_0)= \nabla g (\theta_0) G^{-1} \widetilde{I} $,}} with
 \[\widetilde{I}=\diag\left(
                                     \begin{array}{ccccc}
                                      I_{k_1}& I_d \otimes \left(
                                                      \begin{array}{c}
                                                        0 \\
                                                        I_{k_1-d} \\
                                                      \end{array}
                                                    \right)
                                        & I_{{ {r}}d} & I_{k_1(p-{ {r}})} & I_{k_2} \\
                                     \end{array}
                                   \right)\]
we have $\spn\nabla h(s_0) \subset \spn \nabla g(\theta_0)$.
Applying again Theorem 5 in \cite{MatrixTricks} obtains
$\rank(\nabla g(\theta_0))=d(k_1+{ {r}}-d)+ k_1(p-{ {r}}) +k=\rank(\nabla h(s_0))$. Therefore, $\spn\nabla h(s_0) = \spn \nabla g(\theta_0)$.
\end{proof}

\medskip

\begin{proof}[Proof of Theorem~\ref{teorema GLM RR}]
In Theorem \ref{teorema GLM} we verified  that the conditions of Theorem 5.23 in \cite{vander} are satisfied for the maximum likelihood estimator under model (\ref{expfamily}) satisfying  (\ref{modelo1}). The asymptotic variance of the MLE estimator is given in~(\ref{W}). We also showed Conditions~\ref{C1}, \ref{C2} and  \ref{C3}  are satisfied in the proof of Proposition \ref{condiciones de RR}. The result follows from Proposition \ref{first} since $-V_{\xi_0}=W_{\xi_0}^{-1}$ and
\begin{eqnarray*}
  \hbox{avar}\{\sqrt{n} (\xires - \xi_0)\} &=& \Pi_{\xis_0(W_{\xis_0}^{-1})}W_{\xis_0} \Pi_{\xis_0(W_{\xis_0}^{-1})}^T\\
                                              &=& \nabla g(\theta_0)(\nabla g(\theta_0)^TW_{\xis_0}^{-1}
\nabla g(\theta_0))^
 {\dag}\nabla g(\theta_0)^T\\
 &=&\Pi_{\xis_0(W_{\xis_0}^{-1})}W_{\xis_0}
\end{eqnarray*}

\end{proof}

\end{document}